\newtheorem{theorem}{Theorem}[section]
\newtheorem{lemma}[theorem]{Lemma}
\newtheorem{corollary}[theorem]{Corollary}
\theoremstyle{definition}
\newtheorem{definition}[theorem]{Definition}
\theoremstyle{remark}
\newtheorem{remark}[theorem]{Remark}
\numberwithin{equation}{section}
\begin{document}
\makeatletter
\@namedef{subjclassname@2020}{%
\textup{2020} Mathematics Subject Classification}
\makeatother

\author{Fengjiang Li}
\address[Fengjiang Li]
   {1. Mathematical Science Research Center\\ Chongqing University of Technology\\ Chongqing\\ 400054. 
	\newline 
	2. School of Mathematical Sciences\\ East China Normal University\\
	Shanghai\\ 200241}

\email{lianyisky@163.com}


\title[Gradient Steady Ricci Solitons with Harmonic Weyl Curvature]
{Rigidity of Complete Gradient Steady Ricci Solitons with Harmonic Weyl Curvature}

\subjclass[2020]{Primary 53C21; Secondary 53C25, 53E20}
\keywords{Gradient Ricci solitons, harmonic Weyl curvature, Codazzi tensor}

\begin{abstract}
Our main aim in this paper is to investigate the rigidity of complete noncompact gradient steady Ricci solitons with harmonic Weyl tensor. More precisely, we prove that an $n$-dimensional ($n\geq 5$) complete noncompact gradient steady Ricci soliton with harmonic Weyl tensor and multiply warped product metric is either Ricci flat or isometric to the Bryant soliton up to scaling. 
Meanwhile, for $n\ge 5$, we provide a local structure theorem for $n$-dimensional connected (not necessarily complete) gradient Ricci solitons with harmonic Weyl curvature and multiply warped product metric.
\end{abstract}

\maketitle

\section{Introduction}
An $n$-dimensional Riemannian manifold $(M^{n},g)$ is called a gradient Ricci soliton if there exists a smooth function $f$ on $M$ such that the Ricci tensor satisfies the following equation
\begin{equation}\label{1.1}
Ric+Hess(f)=\rho g
\end{equation}
for some constant $\rho$, where $Ric$ is the Ricci tensor of $g$ and $Hess(f)$ denotes the Hessian of the potential function $f$. 
The Ricci soliton is said to be shrinking, steady, or expanding accordingly as $\rho$ is positive, zero, or negative, respectively. When the potential function $f$ is constant, the gradient Ricci soliton is simply an Einstein manifold and is said to be trivial. Ricci solitons generate self-similar solutions of the Ricci flow, 
and they play a fundamental role in the formation of singularities of the flow (see \cite{Cao1} for a nice overview).
Hence the classification of gradient Ricci solitons has been a very interesting problem. 

In the shrinking case, classification results for gradient Ricci solitons have been obtained by many authors under various curvature conditions on the Weyl tensor, e.g., \cite{Iv3, P, ELM, Naber, NW, CCZ, PW2, CWZ, Z,  FG, MS, CC2, CW,WWW} etc. 
In particular, Fern\'{a}ndez-L\'{o}pez and Garc\'{i}a-R\'{i}o \cite{FG} together with Munteanu and Sesum \cite{MS} proved that any $n$-dimensional complete gradient shrinker with harmonic Weyl tensor is rigid, i.e., it is isometric to a (finite) quotient of $N \times \mathbb{R}^k$, the product soliton of an Einstein manifold $N$ of positive scalar curvature with the Gaussian soliton $\mathbb{R}^k$. 

On the other hand, it is well-known that compact gradient steady solitons are necessarily Ricci flat.
In dimension $n=2$, the only complete noncompact gradient steady Ricci soliton with positive curvature is Hamilton's cigar soliton $\Sigma^2$, see Hamilton \cite{Ha95F}. In dimension three, known examples are given by $\mathbb R^{3}$, $\Sigma^{2}\times \mathbb R$, and the rotationally symmetric Bryant soliton \cite{Bryant}. In \cite{Br}, Brendle showed that the Bryant soliton is the only complete noncompact, nonflat, $\kappa$-noncollapsed, gradient steady Ricci soliton,  proving a conjecture by Perelman \cite{P}. For $n\ge 4$, such a uniqueness result is not expected to hold, and it is desirable to find geometrically interesting conditions under which the uniqueness would hold. Indeed, in the K\"ahler case, Cao \cite{Cao2} constructed a complete gradient steady K\"ahler-Ricci soliton on $\mathbb{C}^m$, for $m\geq 2$, with positive sectional curvature and $U(m)$ symmetry. 

In \cite{CC1}, for $n\ge 3$, Cao-Chen showed that a complete noncompact $n$-dimensional locally conformally flat gradient steady Ricci soliton is either flat or isometric to the Bryant soliton up to scaling; see also \cite{CM} for an independent proof for $n\ge 4$.  Moreover, an important covariant $3$-tensor $D$ for gradient Ricci solitons was introduced in \cite{CC1, CC2} (see also Section 2 for the definition of $D$-tensor). 
Classification results have been obtained in \cite{CCCMM} for Bach flat steady solitons in dimension $n\geq 4$ under some conditions. In particular,  it follows that Bach flatness implies local conformal flatness under positive Ricci curvature assumption.  However, not much was known about the rigidity of complete gradient steady Ricci solitons with harmonic curvature. In fact, it is quite natural to ask the following: 

\medskip
\noindent \textbf{Main Question}.
\emph{Is it true that any $n$-dimensional $(n\geq 4)$ steady gradient Ricci soliton $(M, g, f)$ with harmonic Weyl curvature is either Ricci flat or isometric to the Bryant soliton up to scaling?}

\medskip
Recently, Kim \cite{Kim} has provided a positive answer to the above question for $n=4$. 
In fact, Kim produced a very nice local description of such Ricci soliton metrics and their potential functions. His method of proof was motivated by the work of Cao-Chen \cite{CC1, CC2}, and also based on Derdzi\'{n}ski's study on Codazzi tensors \cite{De} (the harmonicity of the Weyl tensor is equivalent to the Schouten tensor being Codazzi).  
Combining with the gradient Ricci soliton condition, Kim managed to analyze in detail the situation when the Ricci tensor has two and three distinct eigenvalues. However, difficulties arise in the higher dimensional case. Indeed, as the dimension increases, so do the numbers and multiplicities of distinct Ricci-eigenvalues and the situation becomes more subtle.

\medskip
On the other hand, the condition harmonic Weyl curvature is equivalent to the Schouten tensor being a Codazzi tensor. 
Derdzi\'{n}ski \cite{De} described the following: for a Codazzi tensor $A$ and a point $x$ in $M$, 
let $E_A(x)$ be the number of distinct eigenvalues of $A_x$,
and set 
\[
M_A = \{  x \in M \ | E_A {\rm \ is \ constant \ in \ a \ neighborhood \ of \ } x \}.
\] 
Then $M_A$ is an open dense subset of $M$;
in each connected component of $M_A$, 
the eigenvalues are well-defined and differentiable functions.
As a gradient Ricci soliton, 
$(M,g,f)$ is real analytic in harmonic coordinates; see \cite{Iv, HPW}. 
Then if $f$ is not constant, $\{\nabla f \neq 0\}$ is open and dense in $M$ and $\nabla f $ is an eigenvector field of the Ricci tensor (see \ref{lemma3.1}). 
Hence for each point $p\in M_A \cap \{ \nabla f \neq 0  \}$, there exists a neighborhood $U$ of $p$, 
such that the number of distinct eigenvalues of the Ricci tensor is constant on $U$,
and we assume that the multiplicities of $m$ distinct Ricci eigenvalues are $r_{1}, r_{2}, \cdots, r_{m}$, respectively, 
except for the one, which is the eigenvalue with respect to the eigenvector $\nabla f$,
where $1+r_{1}+r_{2}+ \cdots+ r_{m}=n $.

\begin{definition}
A gradient Ricci soliton metric $g$ is called multiply warped product metric of eigenspaces with the Ricci tensor,
if for each point $p \in M_A \cap \{ \nabla f \neq 0  \}$, there exists a neighborhood $U$ of $p$ 
such that 
\[
U = L\times  _{h_1}L_1 \cdots \times _{h_l}L_l \times _{h_{l+1}}N_{l+1}\times\cdots \times _{h_{m}}N_{m}
\]
and the metric $g$, when restricted to $U$, is a multiply warped product metric of the form
\begin{equation*}
	g= ds^2 + h^2_1(s)  dt_1^2 + \cdots 
	+ h^2_l(s) dt_l^2+ h^2_{l+1}(s) \tilde{g}_{l+1}+ \cdots 
	+h^2_{m}(s) \tilde{g}_{m},
\end{equation*}
where $h_j(s)$ are smooth positive functions for $1\leq j \leq m$,
$dim ~L_\nu$=1 for $1\leq \nu \leq l$, and 
$(N_\mu, \tilde{g}_{\mu})$ is an $r_\mu$-dimensional integral submanifold of the eigenspaces corresponding to the Ricci-eigenvalue for $l+1\leq \mu \leq m$.	
\end{definition}

In this paper, motivated by Kim's work \cite{Kim}, we study $n$-dimensional ($n\geq 5$) complete gradient Ricci solitons with harmonic Weyl curvature and multiply warped product metric of eigenspaces with the Ricci tensor. Our first main result is the following classification result. 

\begin{theorem} \label{complete}
	Let $(M^n, g, f)$, $n\geq 5$,  be a complete $n$-dimensional gradient Ricci soliton with harmonic Weyl curvature and multiply warped product metric of eigenspaces  with the Ricci tensor. Then it is one of the following types:
		
	\smallskip	
	{\rm (i)} 
	$(M^n, g)$ is isometric to a quotient of $\mathbb{R}^{r}\times N^{n-r}$ ($2\leq r\leq n-2$), where $N^{n-r}$ is an $(n-r)$-dimensional Einstein manifold with the Einstein constant $\rho\neq 0$.
	Also the potential function is given by $f = \frac{\rho}{2} |x|^2$ modulo a constant on the Euclidean factor.  
	
	\smallskip	
	{\rm (ii)} $(M^n, g)$ has the vanishing covariant $3$-tensor $D$ of Cao-Chen \cite{CC1, CC2}. 
\end{theorem}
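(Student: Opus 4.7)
\medskip
\noindent\textbf{Proof plan.}
The strategy is to work on the open set $\Omega = \{p \in M : \nabla f(p) \neq 0\}$, where the soliton equation and the harmonic Weyl hypothesis together pin down the spectral structure of the Ricci tensor. Recall that in dimension $n \geq 4$ the harmonic Weyl condition is equivalent to the vanishing of the Cotton tensor $C_{ijk}$, and equivalently to the Schouten tensor being a Codazzi tensor. Using the soliton equation together with the contracted second Bianchi identity $\tfrac12 \nabla_k R = R_{kj}\nabla^j f$, a direct computation in the spirit of Cao--Chen \cite{CC1, CC2} produces an identity of the schematic form
\[
  (n-2)\, D_{ijk} \;=\; C_{ijk} \;+\; W_{ijkl}\,\nabla^l f,
\]
so under the harmonic Weyl hypothesis the vanishing of the $D$-tensor is equivalent to the pointwise algebraic condition $W_{ijkl}\nabla^l f = 0$ on $\Omega$. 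Case~(ii) of the theorem is therefore equivalent to controlling this single contraction.

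Next I would invoke Derdzi\'nski's structure theorem for Codazzi tensors applied to the Schouten tensor. On the open dense subset $\Omega' \subset \Omega$ where the multiplicities of the Ricci eigenvalues are locally constant, the eigenvalue functions are smooth, each eigendistribution is integrable, and one has specific umbilicity and parallelism properties of the eigenspaces. The soliton equation $Hess(f) = \rho g - Ric$ shows that $Ric$ and $Hess(f)$ commute; combining this with the Codazzi identity for Schouten along $\nabla f$ then forces $\nabla f$ to lie entirely inside a single Ricci eigenspace on each component of $\Omega'$.

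The proof then splits into cases by the number $k$ of distinct Ricci eigenvalues on $\Omega'$. If $k = 1$, the manifold is Einstein and $D \equiv 0$ trivially. If $k = 2$, with $\nabla f$ tangent to an eigendistribution of multiplicity $r$ and the level sets of $f$ forming the other of multiplicity $n-r$, Derdzi\'nski's theorem yields a local warped-product structure which the block form of $Hess(f)$ imposed by the soliton equation forces to be an honest Riemannian product with a flat Euclidean factor; completeness and a de Rham argument upgrade this to the global splitting $\mathbb{R}^r \times N^{n-r}$ of case~(i), with $N$ Einstein of Einstein constant $\rho \neq 0$. If $k \geq 3$, I would combine the Codazzi identities for Schouten with the second Bianchi identity for $W$, exploiting the extra eigendistributions available in higher dimension, to obtain enough algebraic constraints to force $W_{ijkl}\nabla^l f = 0$; the real-analyticity of Ricci solitons then propagates $D \equiv 0$ from $\Omega'$ to all of $M$.

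The main obstacle is the case $k \geq 3$, which is the genuinely higher-dimensional phenomenon absent from Kim's four-dimensional setting \cite{Kim}. In dimension four one has at most three distinct eigenvalues and the eigenspace decomposition is quite rigid, whereas for $n \geq 5$ one has both more eigendistributions and larger multiplicities, making the algebra of $W$ on the induced decomposition of $\Lambda^2 T_p M$ substantially more intricate. Organising the Codazzi identities so that contributions from the derivatives of the eigenvalue functions cancel and only the contraction $W_{ijkl}\nabla^l f$ survives is the technical heart of the argument, and is where the hypothesis $n \geq 5$ effectively enters.
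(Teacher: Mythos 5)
Your opening reduction is sound: under harmonic Weyl curvature the Cotton tensor vanishes, so by the identity $D_{ijk}=C_{ijk}+f_lW_{lijk}$ case (ii) is indeed equivalent to the pointwise condition $W(\cdot,\cdot,\cdot,\nabla f)=0$, and the framework of Derdzi\'{n}ski's results on Codazzi tensors \cite{De} plus real analyticity is the one the paper uses. However, your case analysis by the number $k$ of distinct Ricci eigenvalues assigns the outcomes to the wrong cases, and two of its steps fail. For $k=2$ you claim that the block form of $Hess(f)$ forces the local warped product to be a genuine Riemannian product with a flat factor. This is false: the soliton equation only gives $Hess(f)=(\rho-\lambda_1)\,ds^2+(\rho-\lambda_2)\,g|_V$, which is entirely compatible with a nontrivial warping function. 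The Bryant soliton itself --- rotationally symmetric, hence locally conformally flat, hence harmonic Weyl --- has exactly two distinct Ricci eigenvalues at generic points, with $\nabla f$ spanning the simple eigenspace, and is not a product; under your scheme it would land in case (i), whereas it belongs to case (ii) with $D\equiv 0$. In the paper, the conclusion $D\equiv 0$ comes precisely from the \emph{fewest}-eigenvalue configuration, namely when all eigenvalues orthogonal to $\nabla f$ coincide (Theorem \ref{same}), while the rigid product of case (i) arises only in the special two-eigenvalue configuration where the eigenvalue $0$ of $\nabla f$ has multiplicity at least two (Theorems \ref{twodis} and \ref{twoone2}).

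Your treatment of $k\ge 3$ is also not viable. You propose to extract $W_{ijkl}f_l=0$ algebraically, but the paper instead shows, via the integrability ODEs for $\xi_a=(\rho-\lambda_a)/|\nabla f|$ in the multiply warped product of Theorem \ref{mulwar}, that three or more distinct values among $\lambda_2,\dots,\lambda_n$ are \emph{impossible} (Theorem \ref{thm4.1}: one obtains $\sum_i\xi_i^2=\rho=-\sum_i\xi_i^2$, a contradiction), and that the remaining two-eigenvalue configuration with a simple eigenvalue and a Ricci-flat fiber produces a genuine local steady solution $g=ds^2+s^{2(n-3)/(n-1)}dt^2+s^{4/(n-1)}\tilde g$ with $D\neq 0$, which is neither of type (i) nor of type (ii) (Theorem \ref{twoone1}). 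This exceptional metric is eliminated only by completeness: it has negative scalar curvature, contradicting B.-L. Chen's theorem \cite{Ch} that complete steady solitons satisfy $R\ge 0$. Your sketch contains no mechanism for ruling out this solution, and without that step the stated dichotomy is false. The ODE analysis of the integrability conditions and the exclusion of this exceptional local metric are the technical heart of the proof and are missing from your proposal.
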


Note that, as mentioned before, the analog of Theorem 1.1 in the shrinking case was already known \cite{FG, MS}. In addition, Theorem \ref{complete} (ii) includes the case when $(M^n, g)$ is Einstein: if $f$ is a constant function then the vanishing of $D$-tensor follows from its definition, and the case of nonconstant $f$ follows from Theorem \ref{same} or from, e.g., the work of Cheeger-Colding \cite{CC} on warped products and Hessians.

Meanwhile, Cao-Chen \cite{CC2}  showed that any $n$-dimensional ($n\geq 4$) gradient Ricci soliton with vanishing $D$-tensor has harmonic Weyl curvature. Therefore, together with Theorem \ref{complete},  an $n$-dimensional ($n\geq 5$) gradient {\it steady} Ricci soliton has harmonic Weyl curvature and multiply warped product metric if and only if the $D$-tensor vanishes.
In particular, as a consequence of Theorem \ref{complete} and the very recent work of Cao-Yu \cite{CY}, we have the following partial answer to Main Question for all $n\ge 5$.  

\begin{theorem} \label{steady}
Let $(M^n, g, f)$, $n\geq 5$, be a complete noncompact gradient steady Ricci soliton with harmonic Weyl curvature  and multiply warped product metric of eigenspaces  with the Ricci tensor. Then it is either Ricci flat or isometric to the Bryant soliton up to scaling.
\end{theorem}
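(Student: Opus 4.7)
The plan is to read off Theorem \ref{steady} as an essentially immediate consequence of Theorem \ref{complete} combined with the Cao--Yu rigidity result \cite{CY}, with only one small point to verify: that the product case (i) of Theorem \ref{complete} cannot occur in the steady regime. So I would proceed in three short steps.

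First, I would apply Theorem \ref{complete} to the given complete noncompact gradient steady Ricci soliton $(M^n,g,f)$ with harmonic Weyl curvature, obtaining the dichotomy: either (i) $(M^n,g)$ is isometric to a quotient of a Riemannian product $\mathbb{R}^{r}\times N^{n-r}$ with $2\le r\le n-2$ and $N^{n-r}$ Einstein of nonzero Einstein constant, equal to the soliton constant $\rho$; or (ii) the Cao--Chen covariant $3$-tensor $D$ vanishes identically on $M$.

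Second, I would rule out case (i). Since $(M^n,g,f)$ is steady, we have $\rho=0$ by definition. On the other hand, case (i) of Theorem \ref{complete} requires that the Einstein factor $N^{n-r}$ have Einstein constant $\rho\neq 0$, a direct contradiction. Hence case (i) is vacuous in the steady setting, and we must be in case (ii), i.e.\ $D\equiv 0$ on $M$.

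Third, with $D\equiv 0$ in place, I would invoke the recent classification of Cao--Yu \cite{CY}: any complete noncompact gradient steady Ricci soliton of dimension $n\ge 4$ whose $D$-tensor vanishes identically is either Ricci flat or isometric, up to scaling, to the Bryant soliton. Applying this to our $(M^n,g,f)$ yields exactly the two alternatives claimed in Theorem \ref{steady}, completing the argument.

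The only genuine work lies upstream: Theorem \ref{complete}, whose proof must handle the many possible patterns of Ricci eigenvalues in dimensions $n\ge 5$, and the Cao--Yu theorem, which supplies the final step from vanishing $D$ to the Bryant soliton. Once both are granted, the present deduction is essentially bookkeeping: the steady condition $\rho=0$ is exactly what kills the product alternative in Theorem \ref{complete}, leaving the $D=0$ branch, on which \cite{CY} then yields rigidity. I do not foresee any hidden obstacle in this final assembly.
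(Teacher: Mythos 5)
Your proposal is correct and follows exactly the route the paper takes: Theorem \ref{complete} forces the $D=0$ alternative (since the product case requires Einstein constant $\rho\neq 0$, impossible for a steady soliton), and the Cao--Yu classification \cite{CY} of complete steady solitons with vanishing $D$-tensor then gives Ricci flat or Bryant. The paper presents this as an immediate consequence in the introduction rather than as a separate proof, and your assembly matches it.
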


On the other hand, the {\it expanding} solitons are less rigid, and various works have been done recently; see, e.g., \cite{PW, CD, Cho} and the references therein. As another consequence of Theorem \ref{complete} and the work of Cao-Yu \cite{CY} for complete $D$-flat expanding solitons, we have the following classification result.

\begin{theorem} \label{expand}
Let $(M, g,f)$, $n\geq 5$, be a complete expanding gradient Ricci soliton with harmonic Weyl curvature and multiply warped product metric of eigenspaces  with the Ricci tensor.
Then it is one of the following types.
	
	\smallskip	
	{\rm (i)} $g$ is an Einstein metric with a constant potential function $f$.
	
	\smallskip	
	{\rm (ii)}
	$(M^n, g)$ is isometric to a quotient of 
	$ \mathbb{R}^{r} \times {N}^{n-r}$,
	where $2 \leq r\leq n-2$, $ \mathbb{R}^{r}$ is the Gaussian expanding soliton, and ${N}^{n-r}$ is an $(n-r)$-dimensional Einstein manifold with the Einstein constant $\rho < 0$.	
	
	\smallskip
	{\rm (iii)} $(M^n, g)$ is rotationally  symmetric and a quotient of an expanding soliton of the form
	\[
	\left( [0,\, \infty),\, ds^2 \right) \times \,_h\left(\mathbb{S}^{n-1}, \bar{g}_0\right) 
	\]
    where $\bar{g}_0$ is the round metric on $\mathbb{S}^{n-1}$.

	\smallskip
{\rm (iv)} $(M^n, g)$ is a quotient of some warped product expanding Ricci soliton of the form
	\[
\left(\mathbb{R} ,\, ds^2 \right) \times\, _h\left(N^{n-1}, \bar{g} \right) 
\]
where $\left(N^{n-1}, \bar{g} \right) $ is an Einstein manifold of negative scalar curvature.
\end{theorem}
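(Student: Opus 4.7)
The plan is to derive Theorem \ref{expand} as a short corollary of Theorem \ref{complete} combined with the classification of complete expanding gradient Ricci solitons with vanishing Cao--Chen $D$-tensor due to Cao--Yu \cite{CY}. Since the substantive analytic work is packaged in those two inputs, the argument should reduce to a clean case analysis matching their outputs to the four alternatives (i)--(iv) in the statement.

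Concretely, I first apply Theorem \ref{complete} to the given complete expanding soliton $(M^n, g, f)$ with $n\ge 5$ and harmonic Weyl curvature. This produces two exhaustive possibilities. In the first, $(M^n, g)$ is isometric to a quotient of $\mathbb{R}^{r}\times N^{n-r}$ with $2\le r\le n-2$, where $N^{n-r}$ is Einstein with nonzero Einstein constant $\rho$, and $f=\tfrac{\rho}{2}|x|^2$ on the Euclidean factor. In the expanding regime one has $\rho<0$, so the soliton equation \eqref{1.1} on $(\mathbb{R}^r, g_{\mathrm{flat}}, f)$ identifies that factor with the Gaussian expanding soliton, while $N^{n-r}$ becomes Einstein of negative Einstein constant; this is precisely alternative (ii) of Theorem \ref{expand}.

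In the second possibility, the $D$-tensor of $(M, g, f)$ vanishes identically. At this step I invoke the Cao--Yu classification of complete $D$-flat gradient expanding Ricci solitons, which yields exactly the three remaining cases: an Einstein metric with constant potential (alternative (i)); a rotationally symmetric warped product $\bigl([0,\infty),ds^2\bigr)\times_{h}(\mathbb{S}^{n-1},\bar{g}_{0})$ (alternative (iii)); or a warped product $(\mathbb{R},ds^2)\times_{h}(N^{n-1},\bar{g})$ over an Einstein base of negative scalar curvature (alternative (iv)). Assembling these with Case~1 above exhausts the four alternatives.

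The main obstacle is not in this derivation itself but upstream, in Theorem \ref{complete}, whose proof must handle the combinatorics of multiple Ricci eigenvalues in dimension $n\ge 5$, and in the Cao--Yu structural result on $D$-flat expanders. Granted both of these, the only verification needed at the level of Theorem \ref{expand} is the consistency of the sign $\rho<0$ with the Gaussian expanding soliton normalization appearing in Theorem \ref{complete}(i), which is immediate from the defining equation \eqref{1.1}.
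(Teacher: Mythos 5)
Your proposal matches the paper's own derivation: the paper obtains Theorem \ref{expand} precisely by combining Theorem \ref{complete} (whose alternative (i) yields case (ii) with $\rho<0$ in the expanding regime, and whose alternative (ii) gives $D=0$) with the Cao--Yu classification of complete $D$-flat expanding gradient Ricci solitons, which supplies cases (i), (iii), and (iv). The argument is correct and is essentially the same as the paper's.
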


Inspired by the work of Cao-Chen \cite{CC1, CC2} and the work of Kim \cite{Kim}, we in fact derived a local description of Ricci soliton potential functions under the assumption of harmonic Weyl curvature. By using the method of exterior differential and moving frames, we first obtain the integrability condition of harmonicity and give a local structure of the soliton metric with a multiply warped product. Then we divide the discussion into three cases, according to the numbers and multiplicities of distinct Ricci-eigenvalues. As a result, we obtain the following local classification. 

\begin{theorem} \label{local}
	Let $(M^n, g, f)$, $(n\geq 5)$, be an $n$-dimensional (not necessarily complete) connected gradient Ricci soliton with harmonic Weyl curvature and multiply warped product metric of eigenspaces with the Ricci tensor. Then it is one of the following four types.
	
	\smallskip
	{\rm (i)} $(M^n, g)$ is Einstein, and $f$ is a constant function.
	
	\smallskip
	{\rm (ii)} For each point $p \in M$, there exists a neighborhood $U$ of $p$ such that $(U, g)$ is isometric to a domain in the Riemannian product $\mathbb{R}^{r}\times N^{n-r}$ with $g= ds^2 + s^2d\mathbb{S}^2_{r-1}+\tilde{g} $,
	where $2 \leq r\leq n-2$ and $\left(N^{n-r}, \tilde{g}\right)$ is Einstein with the Einstein constant $\rho\neq 0$.
	The potential function is given by $f = \frac{\rho}{2} |x|^2$ modulo a constant on the Euclidean factor.
	
	\smallskip
	{\rm (iii)} For each point $p \in M$, there exists a neighborhood $U$ of $p$ 
	such that $(U,g)$ is isometric to a domain in 
	$\mathbb{R^+}\times\mathbb{R} \times N^{n-2}$ 
	with 
	$g=ds^2 + s^{\frac{2(n-3)}{n-1}} dt^2+ s^{\frac{4}{n-1}} \tilde{g}$,  
	where $\left( N^{n-2}, \tilde{g}\right) $ is Ricci flat. 
	Also, $n\neq5$, $\rho=0$ and $f=\frac{2(n-3)}{n-1} \log (s)$ modulo a constant.
	
	\smallskip
	{\rm (iv)} For each point $p \in M$, 
	there exists a neighborhood $U$ of $p$ 
	such that $(U,g)$ is isometric to a domain in $\mathbb{R} \times N^{n-1}$ 
	with $g= ds^2 + h^2(s) \tilde{g},$
	where $\tilde{g}$ is an Einstein metric  
	on some $(n-1)$-manifold $N^{n-1}$.
	Moreover, the covariant $3$-tensor $D$ of Cao-Chen \cite{CC1, CC2} vanishes.
\end{theorem}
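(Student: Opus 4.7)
The plan is to exploit the equivalence between harmonic Weyl curvature and the Codazzi property of the Schouten tensor $S = \mathrm{Ric} - \frac{R}{2(n-1)}g$, combine it with the soliton equation $\mathrm{Ric} + \mathrm{Hess}(f) = \rho g$, and extend Kim's four-dimensional argument \cite{Kim} to accommodate the richer eigenvalue combinatorics available when $n \geq 5$.

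If $\nabla f \equiv 0$, then $\mathrm{Ric} = \rho g$ and we are in case (i). Otherwise I work on the open set where $\nabla f \neq 0$. The gradient-soliton identity $\mathrm{Ric}(\nabla f) = \frac{1}{2}\nabla R$, combined with the Codazzi property of $S$, shows that $\nabla f$ is an eigenvector of $\mathrm{Ric}$, so I fix a local orthonormal frame $\{e_1 = \nabla f/|\nabla f|,\, e_2, \ldots, e_n\}$ simultaneously diagonalizing $\mathrm{Ric}$ and $\mathrm{Hess}(f)$. Applying Cartan's structure equations $d\omega_i = \omega_{ij}\wedge \omega_j$ and expanding $(\nabla_X S)(Y,Z) = (\nabla_Y S)(X,Z)$ in this frame, I extract the integrability condition: whenever $i, j$ belong to distinct Ricci eigenspaces the connection one-form $\omega_{ij}$ is supported only in the $e_1$-direction, with coefficient determined by the gap of the corresponding eigenvalues. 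This forces each Ricci eigendistribution to be umbilical and totally integrable, yielding a local multiply-warped-product representation $g = ds^2 + \sum_{\alpha} h_\alpha(s)^2 \tilde{g}_\alpha$, in which $s$ is the signed distance along $\nabla f$ and each $(N_\alpha, \tilde{g}_\alpha)$ corresponds to a single Ricci eigenspace of locally constant multiplicity.

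The classification then splits into cases by the number of distinct Ricci eigenvalues on the level sets $\Sigma_s$ of $f$. If there is only one, then $g = ds^2 + h(s)^2 \tilde{g}$ with $\tilde{g}$ Einstein, yielding case (iv); the vanishing of the Cao-Chen $D$-tensor follows from direct substitution of the warped-product form into its defining formula. If there are two, a doubly warped form $g = ds^2 + h_1(s)^2\tilde{g}_1 + h_2(s)^2\tilde{g}_2$ substituted into the soliton equation and the radial Codazzi identity gives a coupled ODE system for $(h_1, h_2, f)$. One subcase admits the linear solution $h_1(s) = s$, which turns the first factor into Euclidean $\mathbb{R}^r$ in polar coordinates and forces Gaussian-soliton behavior on that factor, producing case (ii) with $\rho \neq 0$ on the Einstein factor. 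The other subcase forces $\rho = 0$, $\tilde{g}_2$ Ricci flat, and the explicit power laws $h_1(s) = s^{(n-3)/(n-1)}$, $h_2(s) = s^{2/(n-1)}$, giving case (iii); the dimensional restriction $n \neq 5$ arises because at $n=5$ these two exponents coincide, collapsing this subcase into case (iv). Finally, three or more distinct level-set eigenvalues are ruled out by showing that the resulting ODE system is overdetermined once the trace relation $R + \Delta f = n\rho$ and the Codazzi identities across multiple eigenspaces are all imposed.

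The main obstacle is the nonlinear ODE analysis in the second two-eigenvalue subcase: one must solve the system coming from $\mathrm{Ric} + \mathrm{Hess}(f) = \rho g$ evaluated on $e_1$ and on each eigendirection, together with the radial Codazzi equation, and identify uniquely (up to normalization of $s$ and additive constants in $f$) the family of power-law solutions, while simultaneously forcing $\rho = 0$ and the Ricci-flatness of $\tilde{g}_2$. A secondary difficulty is the exclusion of $\geq 3$ distinct eigenvalues on the level sets: in Kim's four-dimensional setup this was essentially automatic for dimensional reasons, but in $n \geq 5$ it requires genuinely new compatibility arguments that weave together the Bianchi identities, the Cartan structure equations, and the Codazzi identity across multiple eigenspaces.
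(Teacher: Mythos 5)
Your proposal follows essentially the same route as the paper: harmonic Weyl curvature is read as the Codazzi condition on the Schouten tensor, $\nabla f$ is shown to be a Ricci eigenvector, the metric is put locally into multiply warped product form over the $s$-variable, and the classification proceeds by the number of distinct Ricci eigenvalues transverse to $\nabla f$ (three or more excluded, two distinct giving types (ii) and (iii) including the $n\neq 5$ restriction from the coincidence $X=Y$ of the warping exponents, one giving type (iv) with $D=0$). The only ingredient you leave implicit is the real analyticity of $(g,f)$, which the paper needs both to restrict to the dense open set $M_A\cap\{\nabla f\neq 0\}$ where the eigenvalue multiplicities are locally constant, and in the final step to propagate a single local type from an open subset to all of the connected manifold $M$.
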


We point out that the incomplete steady gradient soliton in Theorem \ref{local} {\rm (iii)} has negative scalar curvature, which is in contrast to the fact that complete steady gradient solitons should have nonnegative scalar curvature \cite{Ch}. Thus, Theorem \ref{complete} follows immediately from Theorem \ref{local}.
\begin{remark} 
	The proof of Theorem \ref{local} has been further extended to treat vacuum static spaces and CPE metrics with harmonic curvature \cite {F-M-1, CL}, as well as quasi-Einstein manifolds with harmonic Weyl curvature \cite{CLS},  under the assumption of multiply warped product metric of eigenspaces  with the Ricci tensor.
\end{remark} 

Next, we discuss some applications. 
In \cite{CMM}, the authors obtained some results on gradient Ricci solitons with certain vanishing conditions on the Weyl tensor. More precisely, assuming ${\rm div}^4(W)=0$ and under certain Ricci curvature assumptions,
they showed that the Ricci soliton has harmonic Weyl curvature. 
Combining this fact with Theorem \ref{steady}, Theorem \ref{expand} and the work of \cite{Kim} for $n=4$, we have the following two corollaries. 

\begin{corollary}\label{cor1.4}
	Let $(M^n,g,f)$, $n\geq 4$, be a complete gradient steady Ricci soliton
	with positive Ricci curvature such that the scalar curvature attains its maximum at some point $p_0\in M$. 
	If the soliton is of multiply warped product metric and $\operatorname{div}^4(W)=0$, then $M$ is either Ricci flat or isometric to the Bryant soliton.
\end{corollary}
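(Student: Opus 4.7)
The proof proposal is a two-step reduction. First I would invoke the result of Catino–Mantegazza–Mazzieri \cite{CMM}, which states that on a gradient Ricci soliton the vanishing of $\operatorname{div}^{4}(W)$, combined with suitable positivity of the Ricci curvature (and, in the steady case, a scalar-curvature-maximum hypothesis), forces the Weyl tensor to be harmonic. The assumptions of the corollary are tailored precisely to this input: $Ric>0$ and $R$ attains its maximum at an interior point $p_{0}$. Together with Hamilton's identity $R+|\nabla f|^{2}=\mathrm{const}$ for steady solitons and the soliton Bochner formula $\Delta_{f} R=-2\,|Ric|^{2}$, these assumptions provide exactly the analytic ingredients (a maximum principle at $p_{0}$, control of the weighted auxiliary tensors) needed to run the CMM argument; the output is that $W$ is harmonic on all of $M$.

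Second, I would apply the already-established rigidity of complete steady solitons with harmonic Weyl tensor. For $n\geq 5$ this is Theorem \ref{steady} of the present paper, and for $n=4$ it is the theorem of Kim \cite{Kim}. Both conclude that $(M,g)$ is either Ricci-flat or isometric to the Bryant soliton up to scaling. The positivity of Ricci rules out the Ricci-flat alternative, and also ensures that $M$ is noncompact (since any compact gradient steady soliton must be Ricci-flat), so only the Bryant possibility survives. This is the claimed dichotomy.

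The only genuinely nontrivial step, and therefore the main obstacle, is verifying that the concrete hypotheses of the corollary — strictly positive Ricci together with an interior maximum of $R$ — really do match the hypotheses under which \cite{CMM} derive harmonicity of $W$ from $\operatorname{div}^{4}(W)=0$ in the steady case. Once this matching is made explicit, the rigidity follows entirely by citation, and the proof reduces to a careful assembly of \cite{CMM}, Theorem \ref{steady}, and \cite{Kim}.
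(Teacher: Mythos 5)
Your proposal matches the paper's argument exactly: the corollary is obtained by citing Catino--Mastrolia--Monticelli \cite{CMM} to upgrade $\operatorname{div}^4(W)=0$ (under positive Ricci and the scalar-curvature-maximum hypothesis) to harmonic Weyl curvature, and then applying Theorem \ref{steady} for $n\geq 5$ together with Kim's result \cite{Kim} for $n=4$. Your additional remark that positive Ricci eliminates the Ricci-flat alternative is correct but not needed for the statement as written.
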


\begin{corollary}\label{cor1.6}
	Let $(M^n,g)$, $n\geq 4$, be a complete gradient expanding Ricci soliton with nonnegative Ricci curvature. 
	If the soliton is of multiply warped product metric and $\operatorname{div}^4(W)=0$, then $M$ is one of the following: 
	
	\smallskip
	{\rm (i)} $g$ is an Einstein metric with $f$ a constant function.
	
	\smallskip	
	{\rm (ii)} $(M^n, g)$ is isometric to a quotient of 
	$ \mathbb{R}^{r} \times {N}^{n-r}$,
	where $2 \leq r\leq n-2$, $ \mathbb{R}^{r}$ is the Gaussian expanding soliton,
	and ${N}^{n-r}$ is an $(n-r)$-dimensional Einstein
	manifold with the Einstein constant $\rho < 0$.	
	
	\smallskip	
	{\rm (iii)} $(M^n, g)$ is rotationally symmetric and a quotient of an expanding soliton of the form
	\[
	\left( [0,\, \infty),\, ds^2 \right) \times \,_h\left(\mathbb{S}^{n-1}, \bar{g}_0 \right) 
	\]
	where $\bar{g}_0$ is the round metric on $\mathbb{S}^{n-1}$.
	
	\smallskip
	{\rm (iv)} $(M^n, g)$ is a quotient of some warped product expanding Ricci soliton of the form
	\[
	\left(\mathbb{R} ,\, ds^2 \right) \times \, _h\left(N^{n-1}, \bar{g} \right) 
	\]
	where $\left(N^{n-1}, \bar{g} \right) $ is an Einstein manifold of negative scalar curvature.
\end{corollary}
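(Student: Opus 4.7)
The proof is a direct combination of three ingredients: the \cite{CMM} implication from ${\rm div}^4(W)=0$ to harmonic Weyl curvature, our Theorem \ref{expand}, and Kim's four-dimensional classification \cite{Kim}.

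My plan has two steps. \emph{Step 1.} I would invoke the result of Catino--Mantegazza--Mastrolia \cite{CMM}: on a complete gradient Ricci soliton with nonnegative Ricci curvature and ${\rm div}^4(W) = 0$, their main technical proposition upgrades the fourth-order divergence vanishing to harmonicity of the Weyl tensor, i.e.\ $\delta W = 0$. The hypothesis matchup must be verified carefully; in particular, the integral Bochner-type computation in \cite{CMM} needs to be valid in the expanding regime, where the quadratic growth of $f$ together with $Ric \geq 0$ provides the integrability required to integrate by parts against the weight $e^{-f}$ and to extract $\delta W = 0$.

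\emph{Step 2.} Once harmonic Weyl curvature is secured, the classification follows by dimension. For $n \geq 5$, Theorem \ref{expand} directly produces the four listed cases (i)--(iv). For $n = 4$, the local classification of Kim \cite{Kim} gives an analogous list of local models; completeness together with nonnegative scalar curvature (a consequence of $Ric \geq 0$) rules out any incomplete negative-scalar-curvature case analogous to Theorem \ref{local}(iii), leaving precisely the four stated complete expanding models.

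The main obstacle is Step 1. While the implication \emph{``${\rm div}^4(W)=0$ plus suitable curvature assumptions $\Rightarrow$ harmonic Weyl''} is established in \cite{CMM}, one must confirm that the specific statement applies to expanding solitons with $Ric \geq 0$, rather than only to the steady or shrinking cases that are most prominently emphasized there. If necessary, a mild adaptation of their weighted integral identity, exploiting the quadratic growth of $f$ on expanders and the resulting control on curvature decay, closes the gap; after that, the rest of the proof is a pure citation of Theorem \ref{expand} and the $n=4$ argument of \cite{Kim}.
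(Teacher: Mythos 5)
Your proposal matches the paper's argument exactly: the paper likewise derives this corollary by citing the result of Catino--Mastrolia--Monticelli \cite{CMM} (that ${\rm div}^4(W)=0$ together with the Ricci curvature assumption forces harmonic Weyl curvature) and then invoking Theorem \ref{expand} for $n\geq 5$ and Kim's classification \cite{Kim} for $n=4$. Your extra caution in Step 1 about verifying that the \cite{CMM} hypotheses cover the expanding case is reasonable, but the paper treats this as a direct citation and offers no further argument.
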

	
	
	

This paper is organized as follows. 
In Section 2, we give some formulas and notations for Riemannian manifolds
and Ricci solitons by using the method of moving frames.
In Section 3, we derive the integrability conditions (ODEs) for a gradient Ricci soliton with harmonic Weyl tensor
when the local structure of the metric is a multiply warped product.
In Section 4-6, we divide our discussion into three cases according to the numbers and multiplicities of distinct
Ricci-eigenvalues $\lambda_i$, $i=1,2,\cdots, n$.
Here, we denote $\lambda_1$ as the Ricci-eigenvalue with respect to the gradient vector $\nabla f$ of the potential function. 
Concretely, in Section 4, we study the case that
there are at least three mutually different values in the eigenvalues 
$\lambda_2, \cdots, \lambda_n$,
but it turns out that this case cannot occur.
In Section 5, we analyze the case that 
there are exactly two distinct Ricci values in the eigenvalues $\lambda_2, \cdots, \lambda_n$.
Then two subcases are divided according to 
whether one of the two distinct Ricci-eigenvalues is of single multiplicity or not.
Types {\rm (ii)} and {\rm (iii)} of Theorem \ref{local} come from this part.
In Section 6, we treat the remaining case that all $\lambda_2,~\lambda_3,~\cdots,~\lambda_n$ are equal,
for which the $D$-tensor must vanish. 
In the last section, we summarize and prove the stated theorems.

\section{Preliminaries}
In this section, we first recall some formulae and notations 
for Riemannian manifolds by using the method of moving frames.
Then we give some fundamental formulae of Ricci solitons.

\subsection{Some notations for Riemannian manifolds.}
Let $M^{n}(n \geq 3)$ be an
$n$-dimensional Riemannian manifold,
$E_{1}, \cdots, E_{n}$ be a local orthonormal frame
fields on $M^{n}$, and $\omega_{1}, \cdots, \omega_{n}$
be their dual 1-forms. In this paper we make the following
conventions on the range of indices:
\[
1\leq i,j,k,\cdots\leq n
\]
and agree that repeated indices are summed over the respective ranges. 
Then we can write the structure equations of $M^{n}$ as follows:
\begin{equation}\label{2.1}
d\omega_{i}=\omega_{j}\wedge\omega_{ji}
\quad {\rm and }\quad \omega_{ij}+\omega_{ji}=0;
\end{equation}
\begin{equation}\label{2.2}
-\frac{1}{2}R_{ijkl}\omega_{k}\wedge\omega_{l}=
d\omega_{ij}-\omega_{ik}\wedge\omega_{kj}
\quad {\rm and }\quad R_{ijkl}=-R_{jikl},
\end{equation}
where $d$ is the exterior differential operator on $M$,
$\omega_{ij}$ is the Levi-Civita connection form
and $R_{ijkl}$ is the Riemannian curvature tensor of $M$.
It is known that the Riemannian curvature tensor satisfies
the following identities:
\begin{equation}\label{2.3}
R_{ijkl}=-R_{ijlk}, \quad R_{ijkl}=R_{klij}
\quad {\rm and }\quad R_{ijkl}+R_{iklj}+R_{iljk}=0.
\end{equation}
The Ricci tensor $R_{ij}$ and scalar curvature $R$ are defined respectively by
\begin{equation}\label{2.4}
R_{ij}:=\sum\limits_{k}R_{ikjk} \quad {\rm and }\quad R=\sum\limits_{i}R_{ii}.
\end{equation}
Let $f$ be a smooth function on $M^{n}$, we define the
covariant derivatives $f_{i}$, $f_{i,j}$ and $f_{i,jk}$ as follows:
\begin{equation}\label{2.5}
f_{i}\omega_{i}:=df,\quad  f_{i,j}\omega_{j}:=df_{i}+f_{j}\omega_{ji},
\end{equation}
and
\begin{equation}\label{2.6}
f_{i,jk}\omega_{k}:=df_{i,j}+f_{k,j}\omega_{ki}+f_{i,k}\omega_{kj}.
\end{equation}
We know that
\begin{equation}\label{2.7}
f_{i,j}=f_{j,i} \quad {\rm and }\quad f_{i,jk}-f_{i,kj}=f_{l}R_{lijk}.
\end{equation}
The gradient, Hessian and Laplacian of $f$ are defined by the following formulae:
\begin{equation}\label{2.8}
\nabla f:= f_{i}E_{i}, \quad  Hess(f):=f_{i,j}\omega_{i}\otimes \omega_{j} \quad {\rm and }\quad 
\Delta f:=\sum\limits_{i}f_{i,i}.
\end{equation}
The covariant derivatives of tensors $R_{ij}$ and $R_{ijkl}$
are defined by the following formulae:
\begin{equation}\label{2.9}
R_{ij,k}\omega_{k}:=dR_{ij}+R_{kj}\omega_{ki}+R_{ik}\omega_{kj}
\end{equation}
and
\begin{equation}\label{2.10}
R_{ijkl,m}\omega_{m}:=dR_{ijkl}+R_{mjkl}\omega_{mi}+
R_{imkl}\omega_{mj}+R_{ijml}\omega_{mk}+R_{ijkm}\omega_{ml}.
\end{equation}
By exterior differentiation of \eqref{2.2}, one can get the second Bianchi identity
\begin{equation}\label{2.11}
R_{ijkl,m}+R_{ijlm,k}+R_{ijmk,l}=0.
\end{equation}
From \eqref{2.4}, \eqref{2.10} and \eqref{2.11}, we have
\begin{equation}\label{2.12}
R_{ij,k}-R_{ik,j}=-\sum\limits_{l}R_{lijk,l},
\end{equation}
and so
\begin{equation}\label{2.13}
\sum\limits_{j}R_{ji,j}=\frac{1}{2}R_{i}.
\end{equation}

We define the Schouten tensor as
$A=A_{ij}\omega_{i}\otimes \omega_{j},$
where
\begin{equation}\label{2.14}
A_{ij}:=R_{ij}-\frac{1}{2(n-1)}R \delta_{ij},
\end{equation}
then $A_{ij}=A_{ji}$. The tensor
\begin{equation}\label{2.15}
W_{ijkl}:=R_{ijkl}-\frac{1}{n-2}
(A_{ik}\delta_{jl}+A_{jl}\delta_{ik}-
A_{il}\delta_{jk}-A_{jk}\delta_{il})
\end{equation}
is called the Weyl conformal curvature tensor which does not change under the conformal
transformation of the metric. Moreover, as it can be easily seen by the formula above,
$W$ is totally trace-free. 
In dimension three, $W$ is identically zero on every Riemannian manifold, whereas,
when $n\geq 4$, the vanishing of the Weyl tensor is
is equivalent to the locally conformal flatness of $(M^n,g)$. We also recall that in dimension $n=3$, 
$(M, g)$ is locally conformally flat if and only if the Cotton tensor $C$, defined as follows, vanishes
\begin{equation}\label{2.16}
C_{ijk} := A_{ij,k} - A_{ik,j}.
\end{equation} 
We recall that, for $n\geq 4$,  using the second Bianchi identity 
the Cotton tensor can also be defined as one of the possible divergences of the Weyl tensor:
\begin{equation}\label{2.17}
-\frac{n-2}{n-3}\sum\limits_{l} W_{lijk,l}= C_{ijk}.
\end{equation}

On any $n$-dimensional manifold $(M, g)$ $(n\geq 4)$,
in what follows a relevant role will be played by the Bach tensor, 
first introduced in general relativity by Bach \cite{bac} in early 1920s'.
 By definition, we have
\begin{equation}\label{2.18}
B_{ij}: = \frac{1}{n-3}W_{ikjl, kl} + \frac{1}{n-2}R_{kl}W_{ikjl}
\end{equation}
and by equation \eqref{2.17}, we have an equivalent expression of the Bach tensor:
\begin{equation}\label{2.19}
B_{ij} =\frac{1}{n-2} \left( C_{ijk, k}+R_{kl}W_{ikjl}\right).
\end{equation}

\subsection{Some basic facts for gradient Ricci solitons.}	
Now, let $(M^{n},g,f)$ be a gradient Ricci soliton and
equation \eqref{1.1} can be written as
\begin{equation}\label{2.20}
R_{ij}+f_{i,j}=\rho \delta_{ij}.
\end{equation}
We will recall some well-known facts of gradient Ricci solitons.

\begin{lemma} {\bf (Hamilton \cite{Ha95F})}\label{lemma2.1}
	Suppose that $(M^{n}, g, f)$ is a gradient Ricci soliton
	satisfying \eqref{2.20}, then the following formulae hold,
	\begin{equation}\label{2.21}
	\nabla R =2Ric(\nabla f, \cdot),
	\end{equation}
	\begin{equation}\label{2.22}
	R + \left| \nabla f \right|^ 2 - 2\rho f = C_0
	\end{equation}
	and
	\begin{equation}\label{2.23}
	\Delta R=\langle \nabla R, \nabla f \rangle + 2\rho R - 2|Ric|^{2},
	\end{equation}
	where $C_0$ is constant and $|Ric|^{2}=\sum\limits_{i,j}R_{ij}^{2}.$
	\end {lemma}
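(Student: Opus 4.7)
The plan is to derive all three Hamilton identities as direct consequences of the soliton equation \eqref{2.20}, combined with the contracted second Bianchi identity \eqref{2.13} and the Ricci commutation formula \eqref{2.7} for third covariant derivatives of a function. The whole argument is algebraic manipulation in a local orthonormal frame.

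For \eqref{2.21}, I take a covariant derivative of \eqref{2.20} in the direction $E_k$ to get $R_{ij,k}+f_{i,jk}=0$, and then trace over $j=k$. On the curvature side this produces $-\tfrac{1}{2}R_{,i}$ by \eqref{2.13}. On the Hessian side I rewrite $\sum_j f_{i,jj}=\sum_j f_{j,ij}$ using $f_{i,j}=f_{j,i}$, and commute the two inner derivatives via \eqref{2.7} to pick up a curvature term of the form $f_l R_{lj i j}$; invoking the symmetries \eqref{2.3} this contracts to $R_{il}f_l$. Tracing \eqref{2.20} gives $\Delta f = n\rho - R$, so the remaining part is $(\Delta f)_{,i}=-R_{,i}$. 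Setting the two expressions equal and solving yields $R_{,i}=2R_{il}f_l$, which is \eqref{2.21}.

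For \eqref{2.22}, the plan is to show that $d(R+|\nabla f|^2-2\rho f)=0$, so that the quantity is constant on the connected manifold $M$. Differentiating gives $R_{,i}+2f_l f_{l,i}-2\rho f_i$, and substituting $f_{l,i}=\rho\delta_{li}-R_{li}$ from \eqref{2.20} together with the identity \eqref{2.21} just proved produces a complete cancellation. For \eqref{2.23}, I differentiate \eqref{2.21} once more to get $R_{,ij}=2R_{il,j}f_l+2R_{il}f_{l,j}$ and trace $i=j$; the first term becomes $\langle\nabla R,\nabla f\rangle$ by another application of \eqref{2.13}, while substituting $f_{l,i}=\rho\delta_{li}-R_{li}$ in the second turns $2R_{il}f_{l,i}$ into $2\rho R-2|Ric|^2$.

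There is no real obstacle here, since the proof is essentially bookkeeping: every step reduces to contracting indices and applying one of \eqref{2.20}, \eqref{2.7}, or \eqref{2.13}. The only step that requires attention is the sign of the curvature commutator when exchanging the order of the two final derivatives in $\sum_j f_{j,ij}$, since the correct identification of $\sum_j R_{lj i j}$ with $\pm R_{il}$ depends on a careful use of the antisymmetries in \eqref{2.3}.
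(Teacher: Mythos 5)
Your proposal is correct: the trace of the differentiated soliton equation combined with \eqref{2.13}, the commutation rule \eqref{2.7}, and $\Delta f=n\rho-R$ does yield \eqref{2.21}, and the derivations of \eqref{2.22} and \eqref{2.23} by direct differentiation and substitution of $f_{l,i}=\rho\delta_{li}-R_{li}$ are the standard ones. The paper itself gives no proof (it simply cites Hamilton), and your argument is exactly the classical derivation that citation refers to, including the correct sign $\sum_j R_{ljij}=+R_{li}$ in the commutator step.
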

	
	The covariant 3-tensor $D$, introduced by H.-D. Cao and Q. Chen  in \cite{CC1}, 
	turns out to be a fundamental tool in the study of the geometry of gradient Ricci solitons 
	(more in general for gradient Einstein-type manifolds). In components it is defined as
	\begin{equation}\label{2.24}
	D_{ijk} = \frac{1} {n-2} (A_{ij} f_k- A_{ik} f_j) + \frac {1} {(n-1)(n-2)} (\delta_{ij}E_{kl} - \delta_{ik}E_{jl})f_l
	\end{equation}
	where $E_{ij}=R_{ij}-\frac{R}{2}\delta_{ij}$ is the Einstein tensor.
	This 3-tensor $D_{ijk}$ is closely tied to the Cotton tensor and played a significant role in \cite{CC1} and  \cite{CC2} on classifying locally conformally flat gradient steady solitons and Bach flat shrinking Ricci solitons.
	\begin{lemma}{\bf (Cao-Chen \cite{CC1,CC2})}\label{lemma2.2}
		Let $(M^{n}, g, f)$, $ n\geq 3$, be a complete gradient Ricci soliton
		satisfying \eqref{2.20}.  $D_{ijk}$ is closely related to the Cotton tensor and the Weyl tensor by
		\begin{equation}\label{2.26}
		D_{ijk}=C_{ijk}+f_l W_{lijk}.
		\end{equation}
		The  Bach tensor $B_{ij}$ can be expressed in terms of $D_{ijk}$ and the 
		Cotton tensor $C_{ijk}$
		\begin{equation} \label{2.25}
		B_{ij} =\frac{1}{n-2}\left( \sum_kD_{ijk,k}+\frac{n-3}{n-2} f_kC_{jik}\right).
		\end{equation}
	\end{lemma}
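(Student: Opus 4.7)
The plan is to verify both identities in Lemma \ref{lemma2.2} by direct tensorial computation, using the soliton equation, Hamilton's identity \eqref{2.21}, the Weyl decomposition \eqref{2.15}, and the contracted second Bianchi identity \eqref{2.17}.

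For the first identity \eqref{2.26}, I would begin with $C_{ijk} = A_{ij,k} - A_{ik,j}$. Differentiating the soliton equation $f_{i,j} = \rho\delta_{ij} - R_{ij}$ and combining it with the Ricci identity $f_{i,jk} - f_{i,kj} = f_l R_{lijk}$ yields $R_{ij,k} - R_{ik,j} = -f_l R_{lijk}$; inserting this together with $R_{,k} = 2R_{kl}f_l$ from \eqref{2.21} into $C_{ijk}$ gives
\begin{equation*}
C_{ijk} = -f_l R_{lijk} - \tfrac{1}{n-1}\bigl(R_{kl}\delta_{ij} - R_{jl}\delta_{ik}\bigr)f_l.
\end{equation*}
Substituting the Weyl decomposition \eqref{2.15} for $R_{lijk}$, the Weyl part splits off as $-f_l W_{lijk}$, while the $A$-terms regroup into $\tfrac{1}{n-2}(A_{ij}f_k - A_{ik}f_j)$ plus residual Kronecker contractions. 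A short calculation shows the residual terms combine with the Ricci terms above via the identity $\tfrac{1}{n-2}A_{ij} - \tfrac{1}{n-1}R_{ij} = -\tfrac{1}{(n-1)(n-2)}E_{ij}$ (up to the scalar piece), reproducing exactly the second term in the definition of $D_{ijk}$.

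For the Bach identity \eqref{2.25}, I would take the divergence of \eqref{2.26} on the last index:
\begin{equation*}
D_{ijk,k} = C_{ijk,k} + f_{l,k}\,W_{lijk} + f_l\,W_{lijk,k}.
\end{equation*}
The middle term simplifies using $f_{l,k} = \rho\delta_{lk} - R_{lk}$: the $\rho$-contribution vanishes because $W$ is totally trace-free, and antisymmetry of $W$ in its first two slots gives $R_{lk}W_{lijk} = -R_{kl}W_{ikjl}$, so $f_{l,k}W_{lijk} = R_{kl}W_{ikjl}$. Hence $C_{ijk,k} + R_{kl}W_{ikjl} = D_{ijk,k} - f_l W_{lijk,k}$, and by \eqref{2.19} the left side equals $(n-2)B_{ij}$. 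For the last term, the pair-swap symmetry $W_{lijk} = W_{jkli}$ combined with antisymmetry in the first two indices yields $W_{lijk,k} = -W_{kjli,k}$, and \eqref{2.17} then gives $W_{lijk,k} = \tfrac{n-3}{n-2}C_{jli}$; finally, antisymmetry of the Cotton tensor in its last two slots, $C_{jli} = -C_{jil}$, turns $-f_l W_{lijk,k}$ into $\tfrac{n-3}{n-2}f_k C_{jik}$, producing \eqref{2.25}.

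The main obstacle is not conceptual depth but careful bookkeeping of Weyl symmetries through repeated contractions. In particular, both the reduction $R_{lk}W_{lijk} = -R_{kl}W_{ikjl}$ and the conversion of $W_{lijk,k}$ into a Cotton component via \eqref{2.17} require chaining two or three Weyl symmetries with accurate sign tracking, which is where the computation is most error-prone. All the required ingredients---the soliton equation, Hamilton's identity \eqref{2.21}, the Weyl decomposition \eqref{2.15}, the formula \eqref{2.17}, and the trace-free property of $W$---are already available in the preliminaries, so the remaining task is purely algebraic.
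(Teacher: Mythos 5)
Your computation is correct, and it fills in a proof that the paper itself does not supply: Lemma \ref{lemma2.2} is stated as a quoted result of Cao--Chen \cite{CC1,CC2}, so there is no in-paper argument to compare against, but your derivation is exactly the standard one (differentiate the soliton equation and use the Ricci identity to get $R_{ij,k}-R_{ik,j}=-f_lR_{lijk}$, insert the Weyl decomposition to obtain \eqref{2.26}, then take the divergence and convert $W_{lijk,k}$ into a Cotton component via \eqref{2.17} to obtain \eqref{2.25}), adapted to this paper's index conventions. The only blemish is the auxiliary identity in your first paragraph: one has $\tfrac{1}{n-2}A_{ij}-\tfrac{1}{n-1}R_{ij}=+\tfrac{1}{(n-1)(n-2)}E_{ij}$ exactly (not with a minus sign, and not merely ``up to the scalar piece''); with that correction the residual terms do reproduce the $E$-term of $D_{ijk}$ precisely, and the rest of your sign-tracking through the Weyl symmetries is accurate.
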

	
	\medskip
	Finally, by using equations \eqref{2.7}, \eqref{2.20}, \eqref{2.14}, \eqref{2.16} and \eqref{2.23}, 
	we immediately have the following lemma (e.g., see Kim \cite{Kim}).
		\begin{lemma}\label{lemma2.3}
		Let $(M^{n},g,f)$ be a gradient Ricci soliton with harmonic Weyl tensor, i.e.,
		$\delta W = \sum_l W_{lijk,l} =0$. Then the Cotten tensor $C$ vanishes and 
		the Schouten tensor $A$ is Codazzi.
		Moreover the following formula holds
		\begin{equation}
		\begin{aligned}\label{2.27}
		f_l R_{lijk}=&R_{ik,j}-R_{ij,k}\\
		=&\frac{1}{2(n - 1)} \left( R_j \delta_{ik} - R_k \delta_{ij}\right) \\ 
		=&\frac{1}{n - 1} f_l \left( R_{lj} \delta_{ik} - R_{lk} \delta_{ij}\right).
		\end{aligned}
		\end{equation}
	\end{lemma}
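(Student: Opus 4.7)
The proof has three components to establish: vanishing of the Cotton tensor, the Codazzi property of the Schouten tensor, and the three-way chain of identities involving $f_l R_{lijk}$. My plan is to treat the first two as immediate consequences of the preliminaries already set up, and then derive the main identity by combining the soliton equation with the Ricci commutation formula for $f$ and the Codazzi condition.

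For the first two claims, formula \eqref{2.17} directly gives $C_{ijk}=-\frac{n-2}{n-3}\sum_l W_{lijk,l}$, so $\delta W=0$ immediately forces $C\equiv 0$. Since $C_{ijk}=A_{ij,k}-A_{ik,j}$ by \eqref{2.16}, the vanishing of the Cotton tensor is tautologically equivalent to $A$ being Codazzi, i.e., $A_{ij,k}=A_{ik,j}$.

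For the main identity, I would start from the soliton equation \eqref{2.20} in the form $f_{i,j}=\rho\delta_{ij}-R_{ij}$, differentiate once to get $f_{i,jk}=-R_{ij,k}$, and then apply the Ricci commutation formula $f_{i,jk}-f_{i,kj}=f_l R_{lijk}$ from \eqref{2.7}. This produces the first equality $f_l R_{lijk}=R_{ik,j}-R_{ij,k}$. Next I rewrite the Codazzi condition $A_{ij,k}=A_{ik,j}$ using the definition \eqref{2.14} of $A$, which gives $R_{ik,j}-R_{ij,k}=\frac{1}{2(n-1)}(R_j\delta_{ik}-R_k\delta_{ij})$, the second equality. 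Finally, Hamilton's identity \eqref{2.21}, in components $R_j=2R_{jl}f_l$, converts the scalar-curvature gradient into a Ricci contraction against $\nabla f$ and yields the third expression $\frac{1}{n-1}f_l(R_{lj}\delta_{ik}-R_{lk}\delta_{ij})$.

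All the key ingredients are already in the preliminaries, so there is no genuine obstacle; the proof is essentially a careful chain of substitutions. The only points requiring mild care are keeping the sign conventions of the Ricci commutation and the index ordering in $R_{lijk}$ consistent with \eqref{2.2} and \eqref{2.7}, and remembering to insert the factor $2$ from Hamilton's identity at the last step.
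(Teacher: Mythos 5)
Your proposal is correct and follows exactly the route the paper intends: the paper gives no written proof but simply cites \eqref{2.17}, \eqref{2.16}, \eqref{2.7}, \eqref{2.20}, \eqref{2.14} and Hamilton's identity (the reference to \eqref{2.23} there is evidently a slip for \eqref{2.21}), which is precisely the chain of substitutions you carry out. All signs and factors in your computation check out, so there is nothing to add.
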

	
	\section{The basic local structure for $n$-dimensional gradient Ricci soliton with harmonic Weyl curvature}
	In this section, for any gradient Ricci soliton with harmonic Weyl tensor, 
	we first recall by the arguments of \cite{CC2,Kim} that 
	$\frac{\nabla f}{ | \nabla f | }$ is a Ricci-eigenvector field with its eigenvalue $\lambda_1$.
	There is a local function $s$ with $\nabla s =\frac{\nabla f}{ | \nabla f | }$, 
	such that $\lambda_1$ and $R$ are functions of $s$ only.
	Secondly, we show in Lemma \ref{lemma3.3} that
	the Ricci-eigenvalues $\lambda_i$, $i=1,2,\cdots, n$ locally depend only on the variable $s$.
	At last, we derive the integrability conditions when the local structure of the metric is a multiply warped product (see Theorem \ref{mulwar}).
	
	First of all, we have the next lemma (see also Lemma 2.5 in \cite{Kim}).
	
	\begin{lemma} \label{lemma3.1}
		In some neighborhood $U$ of each point in $\{ \nabla f \neq 0  \}$, 
		we choose an orthonormal frame field
		$\{E_1= \frac{\nabla f}{|\nabla f| }, E_2, \cdots,  E_n \}$ 
		with dual frame field $\{\omega_1= \frac{d f}{|\nabla f| }, \omega_2, \cdots, \omega_n\}$.
		The following properties hold.
		
		\smallskip
		{\rm (i)} $E_1= \frac{\nabla f }{|\nabla f | }$ is an eigenvector field of the Ricci tensor.
		
		\smallskip
		{\rm (ii)} The 1-form $\omega_1= \frac{d f}{|\nabla f| }$ is closed. So the distribution
		$V={\rm Span}\{E_2, \cdots , E_n\}$ 
		is integrable from the Frobenius theorem. 
		We denote by $L$ and $N$ the integrable curve of the vector field $E_1$ and the integrable submanifold of $V$ respectively.
		Then it holds locally $M=L\times N$ 
		and there exist local coordinates $(s, x_2, \cdots , x_n)$ of $M$ such that 
		$ ds= \frac{d f}{|\nabla f|}$,  $E_1 = \nabla s$,
		$V={\rm Span} \{\frac{\partial}{\partial x_2}, \cdots , \frac{\partial}{\partial x_n}\} $
		and
		$g=ds^2+\sum_{a}\omega_a^2$.
		
		\smallskip
		{\rm (iii)} $R$ and $R_{11}= Ric (E_1, E_1)$ 
		can be considered as functions of the variable $s$ only, 
		and we write the derivative in $s$ by a prime: 
		$f^{'} = \frac{df}{ds}$, etc..
	\end{lemma}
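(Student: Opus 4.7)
The plan is to bootstrap from the algebraic identity \eqref{2.27} of Lemma \ref{lemma2.3}, which is where the harmonic Weyl hypothesis enters, and then combine it with Hamilton's identities from Lemma \ref{lemma2.1}. The argument has three stages corresponding to (i)--(iii).

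For part (i), I fix a point of $\{\nabla f \neq 0\}$ and work in the adapted frame, so that $f_l = |\nabla f|\,\delta_{l1}$. Substituting this into \eqref{2.27} kills every term with $l\neq 1$ and yields the pointwise relation
\[
R_{1ijk} \;=\; \frac{1}{n-1}\bigl(R_{1j}\delta_{ik} - R_{1k}\delta_{ij}\bigr).
\]
Given $a\ge 2$, I then compute $R_{1a} = \sum_k R_{1kak}$ by splitting the sum into $k=1$, $k=a$, and the $n-2$ remaining indices. The term $k=1$ is $R_{11a1}=0$ by antisymmetry in the first pair, the term $k=a$ is $R_{1aaa}=\tfrac{1}{n-1}(R_{1a}-R_{1a})=0$, and each of the $n-2$ other terms equals $R_{1a}/(n-1)$. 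Therefore $R_{1a}=\tfrac{n-2}{n-1}R_{1a}$, forcing $R_{1a}=0$, which is precisely the statement that $E_1$ is a Ricci eigenvector.

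For part (ii), (i) gives $\mathrm{Ric}(\nabla f,\cdot)=\lambda_1\nabla f$ with $\lambda_1 := R_{11}$. Plugging into \eqref{2.21} produces $\nabla R = 2\lambda_1\nabla f$, so $\nabla R$ is proportional to $\nabla f$ and hence $R$ is locally a function of $f$ on $\{\nabla f\neq 0\}$. Substituting into \eqref{2.22} yields $|\nabla f|^2 = 2\rho f - R(f) + C_0$, again a function of $f$ alone, whence
\[
d\omega_1 \;=\; d\!\left(\tfrac{1}{|\nabla f|}\right)\wedge df \;=\; 0.
\]
Poincar\'e's lemma produces a local primitive $s$ with $ds=\omega_1$, so $\nabla s = E_1$; the distribution $V=\ker\omega_1$ is Frobenius-integrable from $d\omega_1=0$. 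Taking coordinates $(x_2,\ldots,x_n)$ on a fixed leaf of $V$ and transporting them along the flow of $E_1$ gives the coordinate system $(s,x_2,\ldots,x_n)$ with $V={\rm Span}\{\partial/\partial x_a\}$; the orthogonal splitting $TM=\mathbb{R}E_1\oplus V$ supplies the metric form $g=ds^2+\sum_a\omega_a^2$.

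For part (iii), the identity $df=|\nabla f|\,ds$ gives $\partial f/\partial x_a = 0$, so $f$ depends only on $s$; combined with $R=R(f)$ from (ii), this forces $R=R(s)$. Comparing $\nabla R = R'(f)\nabla f$ with $\nabla R = 2\lambda_1\nabla f$ shows $\lambda_1 = \tfrac{1}{2}R'(f)$, so $R_{11}$ is also a function of $s$. The prime notation $f'=df/ds$ is then fixed for later use. The principal obstacle is the pointwise step in (i): the naive contractions of \eqref{2.27} over $(i,j)$ or $(i,k)$ collapse to tautologies because of the built-in Ricci symmetries, so one is forced to split the $k$-sum by cases and exploit both the antisymmetry in the first pair and the diagonal cancellation $R_{1aaa}=0$.
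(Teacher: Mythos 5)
Your proof is correct and follows essentially the same route as the paper: derive $R_{1a}=0$ from the harmonic-Weyl identity \eqref{2.27}, then use Hamilton's identities \eqref{2.21}--\eqref{2.22} to show $|\nabla f|^2$ is constant along the level sets of $f$, so that $\omega_1$ is closed and $R$, $R_{11}$ depend on $s$ alone. The only difference is cosmetic: in (i) the paper simply evaluates \eqref{2.27} at $i=j=1$, $k=a$, where the left side $f_1R_{111a}$ vanishes by antisymmetry and the right side is $-\tfrac{1}{n-1}f_1R_{1a}$, so your contraction-and-case-split is a slightly longer path to the same conclusion (and your closing remark that one is ``forced'' to split the $k$-sum is not quite accurate).
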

	
	\begin{proof}
		Noting that 
		$f_1=|\nabla f|\neq 0$, $f_a=0$ for $2\leq a\leq n$, \eqref{2.27} gives us
		\[
		0=f_1 R_{111a}= f_l R_{l11a}=\frac{1}{n - 1} f_l \left( R_{l1} \delta_{1a} - R_{la} \delta_{11}\right) =-\frac{1}{n - 1}f_1 R_{1a}.	
		\]
		Then $R_{1a}=0$, which implies $E_1= \frac{\nabla f }{|\nabla f | }$ is an eigenvector field of the Ricci cuvature. We proved {\rm (i)}.
		
		Making use of \eqref{2.21}, we get
		\begin{equation}\label{3.1}
		R_a=2R_{aj}f_j=2R_{a1}f_1+2\sum_{b\geq2}R_{ab}f_b=0,
		\end{equation}
		together with \eqref{2.22}, which shows 
		\[
		\left(  \left| \nabla f \right|^ 2\right)_a =-R_a + 2\rho f_a =0.
		\]
		Therefore 
		\[
		d\omega_1 = d\left( \frac{d f}{|\nabla f|}\right) 
		=-\frac{1}{2 |\nabla f|^{\frac{3}{2}}} d \left( |\nabla f|^{2}\right)  \wedge df
		=0
		\]
		and {\rm (ii)} is proved.
		
		Since $R_1=2R_{1j}f_j=2R_{11}f_1$ implies that
		\[
		R_{11}=\frac{1}{2 |\nabla f|}R_1,
		\]
		by combining with \eqref{3.1}, one can immediately get {\rm (iii)}.
	\end{proof}
	
	Let $(M^n,g,f)$ be a gradient Ricci soliton with harmonic Weyl curvature. 
	The condition harmonic Weyl curvature is equivalent to the Schouten tensor being a Codazzi tensor. 
	Derdzi\'{n}ski \cite{De} described the following: for a Codazzi tensor $A$ and a point $x$ in $M$, 
	let $E_A(x)$ be the number of distinct eigenvalues of $A_x$,
	and set 
	\[
	M_A = \{  x \in M \ | E_A {\rm \ is \ constant \ in \ a \ neighborhood \ of \ } x \}.
	\] 
	Then $M_A$ is an open dense subset of $M$;
	in each connected component of $M_A$, 
	the eigenvalues are well-defined and differentiable functions;
	eigenspaces of $A$ form mutually orthogonal differentiable distributions.
	
	On the other hand, as a gradient Ricci soliton, 
	$(M,g,f)$ is real analytic in harmonic coordinates; see \cite{Iv, HPW}. 
	Then if $f$ is not constant, $\{\nabla f \neq 0\}$ is open and dense in $M$.  
	
	Hence for each point $p\in M_A \cap \{ \nabla f \neq 0  \}$, there exists a neighborhood $U$ of $p$, 
	such that the number of distinct eigenvalues of the Ricci tensor is constant on $U$,
	and we assume that the multiplicities of $m$ distinct Ricci eigenvalues are $r_{1}, r_{2}, \cdots, r_{m}$, respectively, 
	except for the one, which is the eigenvalue with respect to the eigenvector $\nabla f$,
	where $1+r_{1}+r_{2}+ \cdots+ r_{m}=n $.
	Combining with Lemma \ref{lemma3.1},
	we can choose a locally orthonormal frame field
	$\{E_1= \frac{\nabla f}{|\nabla f| }, E_2, \cdots,  E_n\}$,
	with the dual $\{\omega_1= \frac{d f}{|\nabla f| }, \omega_2, \cdots, \omega_n\}$
	such that 
	\begin{equation}\label{3.2}
	R_{ij}=\lambda_i \delta_{ij}.
	\end{equation}
	Without loss of generality, we assume that 
	\[
	\lambda_{2}=\cdots=\lambda_{r_1+1}, \quad
	\lambda_{r_1+2}=\cdots=\lambda_{r_1+r_2+1},\quad
	\cdots  \quad 
	\lambda_{r_1+r_2+\cdots+r_{m-1}+2}=\cdots=\lambda_{n},
	\]
	and $\lambda_{2},\, \lambda_{r_1+2},\,\cdots,\,	\lambda_{r_1+r_2+\cdots+r_{m-1}+2}$
	are distinct.
	Next, our first goal is to prove that all Ricci-eigenvalues $ \lambda_i $ $(i=1, \cdots ,n)$
	depend only on the local variable $s$.
	\begin{lemma} \label{lemma3.2}
		Let $(M, g, f)$ be an $n$-dimensional gradient Ricci soliton  with harmonic Weyl curvature.
		For the above local frame field $\{ E_i \}$ in $M_{A} \cap \{ \nabla f \neq 0 \}$, we have that
	\begin{equation}\label{3.3}
	f''+\lambda_1=\rho,
	\end{equation}
	\begin{equation}\label{3.4}
	\omega_{1a}= \xi_a \omega_{a}
	\end{equation}
	and 
	\begin{equation}\label{3.5}
	R_{aa,1}
	=\left(\lambda_1-\lambda_a \right)\xi_a+\frac{R_1}{2(n-1)},
	\end{equation}
	where 
	\begin{equation} \label{3.6}
	\xi_a:= \frac{1}{ |\nabla f|} (\rho-\lambda_{a}).
	\end{equation} 
	\end{lemma}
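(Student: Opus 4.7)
The plan is to reduce every assertion to either a component of the soliton equation, the harmonic-Weyl identity \eqref{2.27}, or the defining formulas \eqref{2.5}, \eqref{2.6}, \eqref{2.9} written out in the adapted frame. Throughout I will use the data already established: $f_{1}=|\nabla f|$, $f_{a}=0$ for $a\ge 2$, the Ricci-diagonal form $R_{ij}=\lambda_{i}\delta_{ij}$ in \eqref{3.2}, and the fact from Lemma \ref{lemma3.1}(iii) that $|\nabla f|$ depends only on the local parameter $s$.

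For \eqref{3.3}, I start from the $(1,1)$-component of the soliton equation, $R_{11}+f_{1,1}=\rho$, which is $\lambda_{1}+f_{1,1}=\rho$; it then suffices to identify $f_{1,1}$ with $f''$. Plugging $f_{1}=|\nabla f|$ and $f_{a}=0$ into $f_{1,j}\omega_{j}=df_{1}+f_{k}\omega_{k1}$, and using $\omega_{11}=0$, the right-hand side collapses to $d|\nabla f|=f''\,\omega_{1}$, so $f_{1,1}=f''$ and simultaneously $f_{1,a}=0$. For \eqref{3.4}, I apply the same device to the $(a,\cdot)$-components: $f_{a,j}\omega_{j}=df_{a}+f_{k}\omega_{ka}=|\nabla f|\,\omega_{1a}$, so $\omega_{1a}=\tfrac{1}{|\nabla f|}\bigl(f_{a,1}\omega_{1}+\sum_{b}f_{a,b}\omega_{b}\bigr)$. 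The $(a,b)$-component of the soliton equation with $R_{ab}=\lambda_{a}\delta_{ab}$ gives $f_{a,b}=(\rho-\lambda_{a})\delta_{ab}$ for $a,b\ge 2$, while the symmetry $f_{a,1}=f_{1,a}=0$ was produced in the previous step. Therefore $\omega_{1a}=\xi_{a}\omega_{a}$ with $\xi_{a}=(\rho-\lambda_{a})/|\nabla f|$.

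For \eqref{3.5}, I combine the harmonic-Weyl identity \eqref{2.27} with a direct computation of $R_{a1,a}$ from the definition \eqref{2.9}. Specializing \eqref{2.27} to $i=k=a$, $j=1$ gives
\[
R_{aa,1}-R_{a1,a}=\frac{1}{2(n-1)}\bigl(R_{1}\delta_{aa}-R_{a}\delta_{a1}\bigr)=\frac{R_{1}}{2(n-1)},
\]
where I have used $R_{a}=0$ from \eqref{3.1}. To evaluate $R_{a1,a}$, I expand $R_{a1,k}\omega_{k}=dR_{a1}+R_{k1}\omega_{ka}+R_{ak}\omega_{k1}$; since $R_{a1}\equiv 0$ and the Ricci tensor is diagonal, only $k=1$ survives in the first sum (contributing $\lambda_{1}\omega_{1a}$) and only $k=a$ in the second (contributing $\lambda_{a}\omega_{a1}=-\lambda_{a}\omega_{1a}$), so $R_{a1,k}\omega_{k}=(\lambda_{1}-\lambda_{a})\omega_{1a}=(\lambda_{1}-\lambda_{a})\xi_{a}\omega_{a}$ by \eqref{3.4}. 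Hence $R_{a1,a}=(\lambda_{1}-\lambda_{a})\xi_{a}$, and \eqref{3.5} follows on combining the two relations.

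The main point demanding care is bookkeeping in the diagonal frame: the simultaneous vanishing of the cross Hessian terms $f_{a,b}$ ($a\neq b$) and the cross Ricci components $R_{a1}$ is what forces $\omega_{1a}$ to be a pure multiple of $\omega_{a}$, and any miss in these cross-terms would spoil the neat closed-form expression for $\xi_{a}$. The deeper input — that such a simultaneous orthonormal diagonalization is available with $E_{1}$ as one of the eigenvectors — is not an obstacle here, as it is already supplied by Lemma \ref{lemma3.1}(i) combined with Derdzi\'nski's structure theorem for Codazzi tensors applied to the Schouten tensor.
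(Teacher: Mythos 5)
Your proposal is correct and follows essentially the same route as the paper: both derive \eqref{3.3} and \eqref{3.4} by writing the Hessian components $f_{i,j}=(\rho-\lambda_i)\delta_{ij}$ in the adapted frame via \eqref{2.5}--\eqref{2.6}, and both obtain \eqref{3.5} by computing $R_{1a,a}=(\lambda_1-\lambda_a)\xi_a$ from \eqref{2.9} and combining it with the Codazzi/harmonicity identity $R_{aa,1}=R_{1a,a}+\tfrac{R_1}{2(n-1)}$ coming from \eqref{2.27}. Your version merely spells out the frame bookkeeping in more detail than the paper does.
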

	\begin{proof}

	From \eqref{2.5} and \eqref{2.6}, it follows that for $2\leq a \leq n$, $f_{1}=|\nabla f|=f'$, $f_{a}=0$, 
    \[
	f_{1,1}=f'' \quad  {\rm and} \quad  f'\omega_{1a}=f_{a,j}\omega_{j}.
    \]
	Putting $f_{i,j}=\left(\rho-\lambda_i \right)\delta_{ij}$ into the above equations, 
	one can immediately get \eqref{3.3} and \eqref{3.4}.
	Covariant derivatives of $R_{ij}$ can be shown by \eqref{2.9}
  \[
	R_{11,1}=\lambda_1'  \quad {\rm and} \quad 
   R_{1a,a}=\left(\lambda_1-\lambda_a \right)\xi_a.
  \]
	Combining with the harmonic condition $R_{aa,1}=R_{1a,a}+\frac{R_1}{2(n-1)}$ yields \eqref{3.5},
	and we have completed the proof of this lemma.
	\end{proof}

	Based on the above lemmas, we present the key lemma which was proved by Kim \cite{Kim} for the four dimensional case. 
	Here we include a proof 
	similar to the Shin's (see also Lemma 3 in \cite{S}).
	\begin{lemma} \label{lemma3.3}
		Let $(M, g, f)$ be an $n$-dimensional gradient Ricci soliton  with harmonic Weyl curvature.
		For the above local frame field $\{ E_i \}$ in $M_{A} \cap \{ \nabla f \neq 0 \}$,
		the Ricci eigenvalues $ \lambda_i $ $(i=1, \cdots ,n)$
		depend only on the local variable $s$, so do the functions $\xi_a$ for $2 \leq a\leq n$.
	\end{lemma}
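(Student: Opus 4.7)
The goal is to verify $E_b(\lambda_a) = 0$ for all $a \in \{1, \ldots, n\}$ and $b \in \{2, \ldots, n\}$; since Lemma \ref{lemma3.1}(iii) already yields this when $a = 1$, only $a \geq 2$ remains. My first step would be to derive the identity
\begin{equation*}
E_b(\lambda_a) = (\lambda_a - \lambda_b)(\omega_{ab})_a \qquad (a \neq b,\ a,b \geq 2),
\end{equation*}
where $(\omega_{ab})_c$ denotes the coefficient of $\omega_c$ in the expansion of the connection form $\omega_{ab}$. To obtain this, apply the Codazzi condition of Lemma \ref{lemma2.3} with $(i,j,k)=(a,a,b)$: since $R_b=0$ for $b\geq 2$ (as in the proof of Lemma \ref{lemma3.1}(ii)), the identity $R_{ij,k}-R_{ik,j}=\tfrac{1}{2(n-1)}(R_k\delta_{ij}-R_j\delta_{ik})$ collapses to $R_{aa,b}=R_{ab,a}$. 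A direct expansion of \eqref{2.9} in the diagonalizing frame \eqref{3.2} gives $R_{aa,k}\omega_k = d\lambda_a$, hence $R_{aa,b}=E_b(\lambda_a)$, and $R_{ab,k}\omega_k = (\lambda_a-\lambda_b)\omega_{ab}$ (since $R_{ab}=0$ for $a\neq b$ and only diagonal Ricci entries survive). When $\lambda_a=\lambda_b$, the displayed identity already forces $E_b(\lambda_a)=0$, so $\lambda_a$ is constant along directions lying inside its own eigenspace.

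For the harder case $\lambda_a \neq \lambda_b$, I would show $(\omega_{ab})_a = 0$ by computing $d\omega_{1a}$ two ways. On one hand, $\omega_{1a}=\xi_a\omega_a$ gives
\begin{equation*}
d\omega_{1a} = d\xi_a\wedge\omega_a + \xi_a\, d\omega_a,
\end{equation*}
with $d\omega_a$ expanded from \eqref{2.1}. On the other hand, the second structure equation \eqref{2.2} gives $d\omega_{1a}=-\tfrac{1}{2}R_{1akl}\omega_k\wedge\omega_l + \omega_{1k}\wedge\omega_{ka}$; by Lemma \ref{lemma2.3} the curvatures involving $E_1$ are forced to $R_{1ijk}=\tfrac{\lambda_1}{n-1}(\delta_{1j}\delta_{ik}-\delta_{1k}\delta_{ij})$, so in particular $R_{1abc}=0$ whenever $b,c\geq 2$ and $R_{1a1b}=\tfrac{\lambda_1}{n-1}\delta_{ab}$. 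Matching the $\omega_1\wedge\omega_b$ coefficients with $b\neq a$ yields $(\xi_b-\xi_a)(\omega_{ab})_1=0$, hence $(\omega_{ab})_1=0$ whenever $\lambda_a\neq\lambda_b$; matching the $\omega_c\wedge\omega_a$ and $\omega_b\wedge\omega_c$ components produces further coupled relations among the $(\omega_{ab})_c$.

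The final step is to combine these structure-equation constraints with the Codazzi identities of the first paragraph applied across every triple of indices, together with the algebraic Bianchi identity, to conclude $(\omega_{ab})_a=0$ whenever $\lambda_a\neq\lambda_b$; the identity from paragraph one then delivers $E_b(\lambda_a)=0$ for all $a,b\geq 2$. This last step is where I expect the main difficulty: in dimension four Kim \cite{Kim} can finish via direct case analysis of the finite list of configurations (two or three distinct values among $\lambda_2,\lambda_3,\lambda_4$), but for $n\geq 5$ the number of eigenspaces and their multiplicities can be arbitrary, so disentangling the coupled system among the $(\omega_{ab})_c$ requires systematic bookkeeping in the spirit of Shin's argument \cite{S}, playing off the antisymmetry $\omega_{ab}=-\omega_{ba}$ against the vanishing of the mixed curvatures $R_{1abc}$. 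Once $\lambda_a$ is shown to depend only on $s$, the formula $\xi_a=(\rho-\lambda_a)/|\nabla f|$ and the fact that $|\nabla f|$ is a function of $s$ alone (a consequence of Lemma \ref{lemma3.1}) imply the same for $\xi_a$, completing the proof.
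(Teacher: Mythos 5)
Your first two paragraphs are sound: the Codazzi identity $E_b(\lambda_a)=(\lambda_a-\lambda_b)\,\omega_{ab}(E_a)$, the formula $R_{1ijk}=\frac{1}{n-1}(R_{1j}\delta_{ik}-R_{1k}\delta_{ij})$ from \eqref{2.27}, and the conclusion $\omega_{ab}(E_1)=0$ for $\lambda_a\neq\lambda_b$ are all correct and consistent with what the paper later records in Lemma \ref{lemma3.3-}. But the decisive claim, $\omega_{ab}(E_a)=0$ whenever $\lambda_a\neq\lambda_b$, is only announced, and you yourself flag it as the open difficulty. This is a genuine gap, not just an omitted computation: the further relations you propose to extract from $d\omega_{1a}$ do not close the system. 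For instance, matching the $\omega_b\wedge\omega_a$ coefficient gives $E_b(\xi_a)=(\xi_b-\xi_a)\,\omega_{ba}(E_a)$, which, after substituting $\lambda_a=\rho-f'\xi_a$ and $E_b(f')=0$, is exactly your paragraph-one identity again — no new information. More fundamentally, for a general Codazzi tensor the derivative $E_b(\lambda_a)$ across distinct eigenspaces need not vanish (principal curvatures of a hypersurface are the standard counterexample), so no amount of bookkeeping with the Codazzi condition, the structure equations, and the Bianchi identity alone can force $\omega_{ab}(E_a)=0$; some additional analytic input from the soliton equation is indispensable. Note also that the paper itself proves $\omega_{a\alpha}=0$ only \emph{after} Lemma \ref{lemma3.3}, precisely because $\omega_{a\alpha}(E_a)=0$ is obtained there from $R_{a\alpha,a}=R_{aa,\alpha}=E_\alpha(\lambda_a)=0$; your plan reverses this logical order, which is why the last step resists.

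The ingredient your sketch never uses is Hamilton's identity \eqref{2.23}, and that is the engine of the paper's actual proof. The paper computes $\Delta R=R''+R'\frac{(n-1)\rho-R+\lambda_1}{f'}$, a function of $s$ only, so \eqref{2.23} forces $|Ric|^2=\mathrm{tr}(Ric^2)$ to depend on $s$ only; it then runs an induction on the power sums $\mathrm{tr}(Ric^k)$, differentiating $\sum_i R_{ii}^k$ in the $E_1$ direction and substituting \eqref{3.5}--\eqref{3.6} to express $(\mathrm{tr}(Ric^k))_1$ as known $s$-dependent terms plus $\frac{k}{f'}\sum_i\lambda_i^{k+1}$, whence $\mathrm{tr}(Ric^{k+1})$ depends on $s$ only; knowing all power sums pins down each $\lambda_i$ as a function of $s$. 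This is precisely Shin's argument (Lemma 3 of \cite{S}), which is the reference you invoke — but what that reference contains is the power-sum induction, not the connection-coefficient analysis you describe. To repair your proof you would need either to import \eqref{2.23} and run this induction, or to find a genuinely new mechanism forcing $\omega_{ab}(E_a)=0$; as written, the argument is incomplete.
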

	
	\begin{proof}
		Firstly, it follows from Lemma \ref{lemma3.1} that $R={\rm tr}(Ric)$ and $\lambda_1= R_{11}$ depend on $s$ only,
		and then we consider the Laplacian of the scalar curvature.	Calculating the covariant derivatives $R$ gives us
		$R_{1,1}=R''$ and $R_{a,b}=R' \xi_a \delta_{ab}$. Thus
	\[
	\Delta R=R_{1,1}+\sum\limits_{a}R_{a,a}=R''+R'\frac{(n-1)\rho-R+\lambda_1}{f'}
	\]
		depends on $s$ only. Then from \eqref{2.23},
		${\rm tr}(Ric^2)=|Ric|^{2}$ also depends on $s$ only.
		Therefore by this motivation, we denote
		$(Ric^k)_{ij}= R_{i i_1} R_{i_1 i_2} \cdots R_{i_{k-1} j}$
		with its trace ${\rm tr}(Rc^k)= \sum_{i=1}^n (\lambda_i)^k$,
       and then	our goal is to show that 
		the functions ${\rm tr}(Ric^k)$ for $k=1,2, \cdots, n-1$ depend on $s$ only, 
		which implies that $ \lambda_i $ for $i=1, \cdots ,n$ also depend only on $s$.
		
		Next, we apply the mathematical induction to prove the desired results.
		Assume ${\rm tr} (Ric^l)$ depends on $s$ only for $1\leq l \leq k$, 
		and then ${\rm tr}(Ric^{k+1})$ will be done.  In fact,
		\begin{align*}
		\left( \sum_{i=1}^n(R_{ii}^k)\right) _1=&\sum_{i=1}^nk R_{ii}^{k-1}(R_{ii})_1\\
		=&k\left(  R_{11}^{k-1}R_{11,1} + \sum_{a=2}^n R_{aa}^{k-1}R_{aa,1}\right).
		\end{align*}
		Putting \eqref{3.2}, \eqref{3.5} and \eqref{3.6} into the above equation,  we have
			\begin{align*}
		\left( \sum_{i=1}^n(R_{ii}^k)\right) _1
		=&k\lambda^{k-1}_1\lambda_1'
		+k\frac{1}{f'}\left[(\sum_{i}\lambda^{k+1}_i -\lambda^{k+1}_1)-(\lambda_1+\rho)(\sum_{i}\lambda^{k}_i -\lambda^{k}_1)\right]\\
		&+k\left( \rho\frac{1}{f'}\lambda_1+\frac{R'}{2(n-1)}\right)\left(\sum_{i}\lambda^{k-1}_i -\lambda^{k-1}_1\right),
		\end{align*}
		By assumption, every term except for $\sum_{i}\lambda_{i}^{k+1}$ in the above equation depends only on $s$.
		Thus ${\rm tr}(Ric^{k+1})=\sum_{i=1}^n \lambda_{i}^{k+1} $ is also a function of $s$ only, 
		and we have completed the proof of this lemma.
	\end{proof}
	
	We proceed to obtain the local structure of the metric for $n$-dimensional gradient Ricci solitons with harmonic Weyl curvature.
	First, we denote $[a]=\{b| \lambda_{b}=\lambda_{a}~ {\rm and } ~ b \neq1\}$ for $2\leq a\leq n$
	and make the following conventions on the range of indices:
	\[
	2 \leq a, b, c \cdots\leq n  \quad {\rm and} \quad  2 \leq \alpha, \beta,  \cdots\leq n
	\]
	where $[a]=[b]$, $[\alpha]=[\beta]$ and $[a]\neq[\alpha]$.
		\begin{lemma} \label{lemma3.3-}
		Let $(M, g, f)$ be an $n$-dimensional gradient Ricci soliton  with harmonic Weyl curvature.
		For the above local frame field $\{ E_i \}$ in $M_{A} \cap \{ \nabla f \neq 0 \}$,  for $p \notin \{1,[a],[\alpha]\}$.
		we have that  
		
		\begin{equation}\label{new}
			\left(\lambda_a-\lambda_\alpha \right)\omega_{a\alpha}(E_{p})=	\left(\lambda_a-\lambda_p \right)\omega_{ap}(E_{\alpha}),
		\end{equation}
		\begin{equation}\label{3.8}
			R_{1a1b}=-\left( \xi'_a+\xi^2_a\right) \delta_{ab},
		\end{equation}
		\begin{equation}\label{3.11}
			\xi'_a+\xi^2_a=-\frac{R'}{2(n-1)f'},
		\end{equation}
		\begin{equation}\label{3.13}
			\lambda_{1}=-f''+\rho=-(n-1)\left( \xi'_a+\xi^2_a\right)
		\end{equation}
		and
		\begin{equation}\label{3.12}
			\lambda'_a-\left(\lambda_1-\lambda_a \right)\xi_a=\frac{R'}{2(n-1)}.
		\end{equation}
		
	\end{lemma}
	
	\begin{proof}	
		We continue to compute the covariant derivatives of $R_{ij}$.
		For $a\neq b$
		\[
		R_{aa,1}=\lambda'_a,  \quad R_{aa,b}=R_{aa,\alpha}=0 \quad  and \quad R_{ab,i}=0,
		\]
		while for the different range of indices, 
		\[
		\left(\lambda_a-\lambda_\alpha \right)\omega_{a\alpha}
		= \sum_{k=1}^{n}R_{a\alpha,k}\omega_k 
		= R_{a\alpha,1}\omega_1+R_{a\alpha,a}\omega_a+R_{a\alpha,\alpha}\omega_\alpha
		+\sum_{i\neq 1,a,\alpha}R_{a\alpha,i}\omega_i.
		\]
		Since $R_{a\alpha,1}=0$ and $R_{a\alpha,a}=0$,
		$\omega_{a\alpha}(E_1)= \omega_{a\alpha}(E_a)=\omega_{a\alpha}(E_\alpha)=0$.
		Therefore, 
		\[
		\left(\lambda_a-\lambda_\alpha \right)\omega_{a\alpha}
		=\sum_{p\notin \{1,[a],[\alpha]\}}R_{a\alpha,p}\omega_p.
		\]
		Therefore, when $p\in \{1,[a],[\alpha]\}$, $\omega_{a\alpha}(E_p)=0$; while $p \notin \{1,[a],[\alpha]\}$
		\[
		\left(\lambda_a-\lambda_\alpha \right)\omega_{a\alpha}(E_p)=R_{a\alpha,p}.
		\]
		By (2.27), $R_{a\alpha,p}=R_{ap,\alpha}=R_{p\alpha,a}$,  then
		\[
		\left(\lambda_a-\lambda_\alpha \right)\omega_{a\alpha}(E_p)=\left(\lambda_a-\lambda_p\right)\omega_{ap}(E_\alpha)
		=\left(\lambda_\alpha-\lambda_p \right)\omega_{\alpha p}(E_a),
		\]
		which shows  \eqref{new} . 
		
		Next we will compute the curvature. From  \eqref{2.2} and \eqref{3.4},
		\[
		-\frac{1}{2}R_{1aij}\omega_{i}\wedge\omega_{j}=\left(\xi'_a+\xi^2_a \right)\omega_{1}\wedge\omega_{a}
		+ \sum_{\alpha\geq 2}\left(\xi_a-\xi_\alpha \right)\omega_{a\alpha}\wedge\omega_{\alpha},
		\]
		which shows \eqref{3.8} since $\omega_{a\alpha}(E_1)=0$.
		Making use of \eqref{2.27} again, then we see
		\[ 
		R_{1a1a}=\frac{R'}{2(n-1)f'} 
		\]
		which shows that all $R_{1a1a}$ are equal for $a=2, \cdots, n$, and \eqref{3.11} holds by \eqref{3.8}.
		Then in combination with \eqref{3.3}, we immediately
		get \eqref{3.13}. Putting $R_{aa,1}=\lambda'_a$ into \eqref{3.5} implies harmonic condition \eqref{3.12}.
		We have completed the proof of this lemma.
	\end{proof}
	
	\begin{remark} \label{kim}
		We would like to express our gratitude to Professor Kim for pointing out an error that when the eigenvalues $\lambda_2,\cdots,~\lambda_n$ are at least three distinct values, the equation $\omega_{a\alpha}=0$ is not always true. However, under the harmonic Weyl tensor condition, we can only establish the following exchange relation as shown in \eqref{new}.
		Under certain specific conditions, such as when at most two eigenvalues $\lambda_2,\cdots,~\lambda_n$ are distinct, $\omega_{a\alpha}=0$ holds true.
	\end{remark}

	At last, we are ready to derive the integrability conditions and the local structure of the metric as a multiply warped product. 
	For the multiplicities of distinct Ricci eigenvalues, without loss of generality, 
	we assume that $r_{1}=r_{2}=\cdots=r_{l}=1$ and 
	$r_{l+1},~ r_{l+2},~\cdots,~ r_{m} \geq2$.  
	\begin{theorem} \label{mulwar}
		Let $(M^n, g, f)$, $(n\geq 4)$, be an $n$-dimensional gradient Ricci soliton with harmonic Weyl curvature. 
		For each point $p \in M_A \cap \{ \nabla f \neq 0  \}$ and the above local frame field $\{ E_i \}$ in $M_{A} \cap \{ \nabla f \neq 0 \}$, $\omega_{a\alpha}=0$ for $k \notin \{1,[a],[\alpha]\}$ if and only if there exists a neighborhood $U$ of $p$ 
		such that 
		\[
		U = L\times\,  _{h_1}L_1 \cdots \times\, _{h_l}L_l\cdots \times\, _{h_{l+1}}N_{l+1}\times\cdots \times\, _{h_m}N_{m}
		\]
		is a multiply warped product furnished with the metric 
		\begin{equation} \label{3.10}
			g= ds^2 + h^2_1(s)  dt_1^2 + \cdots 
			+ h^2_l(s) dt_l^2+ h^2_{l+1}(s) \tilde{g}_{l+1}+ \cdots 
			+h^2_{m}(s) \tilde{g}_{m},
		\end{equation}
		where $h_j(s)$ are smooth positive functions for $1\leq j \leq m$,
		$dim ~L_\nu$=1 for $1\leq \nu \leq l$, and 
		$(N_\mu, \tilde{g}_{\mu})$ is an $r_\mu$-dimensional Einstein manifold 
		with the Einstein constant $(r_\mu-1)k_\mu$ for $l+1\leq \mu \leq m$.
		Let $\lambda_i$ $(i=1,\cdots, n)$ be the Ricci-eigenvalues, $\lambda_1$ being the Ricci-eigenvalue with respect to the gradient vector $\nabla f$,
		and functions $\xi_a$ be given by \eqref{3.6}.	
		Then the following integrability conditions hold
		\begin{equation}
			\xi'_a+\xi^2_a=-\frac{R'}{2(n-1)f'},
		\end{equation}
		\begin{equation}
			\lambda'_a-\left(\lambda_1-\lambda_a \right)\xi_a=\frac{R'}{2(n-1)},
		\end{equation}
		\begin{equation}
			\lambda_{1}=-f''+\rho=-(n-1)\left( \xi'_a+\xi^2_a\right)
		\end{equation}
		and
		\begin{equation}\label{3.14}
			\lambda_a=-f'\xi_a+\rho=- \xi'_a-\xi_a \sum^{n}_{i=2} \xi_i +( r-1)\frac{k}{h^2}.
		\end{equation}
		Here $2\leq a\leq n$ and $\xi_a=h'/h$;  in \eqref{3.14}, when $a=2, \dots, l+1$,  then 
		$r=1$ and $h=h_{a-1}$;
		when $a\in \left[ l+r_{l+1}+\cdots+r_{\mu-1} +2\right]$ ,  then 
		$r=r_\mu$, $h=h_{\mu} $ and $k=k_{\mu} $.
	\end{theorem}
	
	\begin{proof}
		Adequacy is obvious; we only need to prove necessity.
		From the structure equation 
		\[
		-\frac{1}{2}R_{a\alpha ij}\omega_{i}\wedge\omega_{j}=\xi_a\xi_\alpha \omega_{a}\wedge\omega_{\alpha},
		\]
		which implies	
		\begin{equation}\label{3.9}
			R_{a\alpha b \beta}=-\xi_a\xi_\alpha \delta_{ab}\delta_{\alpha\beta}.
		\end{equation}

		Moreover, by equations \eqref{2.1}, \eqref{3.4} and \eqref{new}, 
		\[
		d\omega_\alpha=\sum_\beta\left( \xi_\alpha\delta_{\alpha\beta}
		+\omega_{\alpha\beta} \right) \wedge \omega_{\beta}  \equiv 0 
		~ ( {\rm mod} ~ \omega_\beta).
		\]
		By $d\omega_1=0$ and the Frobenius theorem, the distribution
		$V_a={\rm Span} \{E_b: b\in [a] \} $ is integrable.
		It is worth noting that $\omega_{a\alpha}=0$ yields more information than that Ricci-eigenspaces form mutually orthogonal differentiable distributions. Denoting $N_a$ to be the integrable submanifold of $V_a$, 
		we consider the $r$-dimensional isometric immersion submanifold $N_a$ with the metric 
		$\bar{g}=\sum_{b \in [a]} \omega^2_{b}$
		of the manifold $(M,g)$.
		
		When the multiplicity $r$ of the Ricci-eigenvalue $\lambda_{a}$ is not less than two, it follows from \eqref{3.8} and \eqref{3.9} that 
		\begin{equation}
			\begin{aligned}\label{3.15}
				R_{ab}=&R_{a1 b1}+\sum_{\alpha}R_{a\alpha b\alpha}+\sum_{c} R_{acbc}\\
				=&\left[ -\left( \xi'_a+\xi^2_a \right) -\xi_a \sum_{\alpha} \xi_\alpha \right] \delta_{ab} 
				+\sum_{c} R_{acbc}.
			\end{aligned}
		\end{equation}
		Making use of equations \eqref{3.4} and \eqref{new} again, Gauss equations imply
		\begin{equation}\label{3.16}
			\bar{R}_{abcd}={R}_{abcd}+
			\xi^2_{a}\left( \delta_{ac}\delta_{bd}-\delta_{ad}\delta_{bc} \right),
		\end{equation}
		where $\bar{R}_{abcd}$ is the curvature tensor of $\left( N_a, \bar{g}\right) $.
		Actually, equation \eqref{new} means that the submanifold $(N_a,\bar{g})$ of the Riemannian manifold 
		$(N,~\sum_{i\neq1} \omega^2_{i})$ is totally geodesic, where $N$ is the integrable submanifold generated by the distribution
		$V={\rm Span} \{E_2, \cdots, E_n \} $, see Lemma \ref{lemma3.1}.
		Taking trace in \eqref{3.16} and then plugging \eqref{3.15} into it, one can get
		\begin{equation*}
			\begin{aligned}
				\bar{R}_{ac}=&\sum_{b}R_{abcb}+\left( r-1\right) \xi^2_{a}\delta_{ac}\\
				=& \left[\lambda_a+\left( \xi'_a+\xi^2_a \right) +\xi_a \sum_{\alpha \notin [a]} \xi_\alpha +( r-1) \xi^2_{a} \right] \delta_{ac}
			\end{aligned}
		\end{equation*}
		which depends on $s$ only. Hence, the metric $\bar{g}$ is Einstein.
		Then we can assume that $\bar{g}=h^2(s)\tilde{g}$, where $h(s)$ is a positive function and $\tilde{g}$ is Einstein with the Einstein constant $(r-1)k$. So 
		\[
		\bar{R}_{ac}=\frac{(r-1)k}{h^2}\delta_{ac}
		\]
		and 
		\begin{equation}\label{3.17}
			\lambda_a=- \xi'_a-\xi_a \sum^{n}_{i=2} \xi_i +( r-1)\frac{k}{h^2}.
		\end{equation}
		Also, it follows from the structure equation that $\xi_a=h'/h$.
		
		When the Ricci tensor eigenvalue $\lambda_a$ has single multiplicity,
		by using equations \eqref{2.1}, \eqref{3.4} and \eqref{new}, 
		we calculate the exterior differential of 1-form $\omega_a$: 
		\[
		d\omega_a=\omega_{i}\wedge\omega_{ia}=\xi_a\omega_{1}\wedge\omega_a.
		\]
		Setting the positive function $h(s)$ such that $\xi_a=h'/h$, and we see that
		1-form $\frac{1}{h}\omega_a$ is closed. Then there is a local function $t$ satisfying
		$dt =\frac{1}{h(s)} \omega_a$ and 
		\begin{equation}\label{3.18}
			\lambda_a=R_{1a1 a}+\sum_{\alpha \notin [a]}R_{a\alpha a\alpha}
			=- \xi'_a-\xi_a \sum^{n}_{i=2} \xi_i.
		\end{equation}
		
		Consequently, we obtain that if $\omega_{a\alpha}=0$, the metric is a multiply warped product as seen in \eqref{3.10}. 
		Furthermore, it follows that \eqref{3.14} holds from equations \eqref{2.20}, \eqref{3.17} and \eqref{3.18}. 
		In fact, by comparing with \eqref{3.18}, there is no term $( r-1)\frac{k}{h^2}$ in \eqref{3.17}. 
		However, in the case $r=1$, this term vanishes for any constant $k$, and then the Ricci-eigenvalue $\lambda_{a}$ also satisfies the form like \eqref{3.17}.
		Hence, for convenience sometimes we add this term. We have completed the proof of this theorem.
	\end{proof}

	\section{The local structure  of the case with more than two distinct Ricci-eigenvalues}
	In this section, and Sections 5-6, we will give the local classification of $n$-dimensional gradient Ricci solitons with harmonic Weyl curvature and multiply warped product metric of eigenspaces with the Ricci tensor, according to numbers and multiplicities of distinct Ricci-eigenvalues written as $\lambda_a$, $a=2,\cdots, n$.
	To avoid repetition, unless stated otherwise, the Ricci-eigenvalues mentioned in the following discussion do not include $\lambda_1$, which is the eigenvalue with respect to the gradient vector of the potential function.

	In this section, we shall study the case when $\lambda_2,~\lambda_3,~\cdots,~\lambda_n$ are at least three mutually different, but it turns out that this case cannot occur.
    
			\smallskip
	First, we recall that as a gradient Ricci soliton, $(M,g,f)$ is real analytic in harmonic coordinates \cite{Iv}, i.e., $g$ and $f$ are real analytic (in harmonic coordinates).
	To exploit the real analyticity, we shall use the following simple facts:
	
	{\rm (i).} 
	If an analytic function $P$ is not constant, $\{ \nabla P \neq 0  \}$ is open and dense in $M$. 
	
	{\rm (ii).} 
	If $P \cdot Q$ equals zero (identically) on an open connected set $U$ for two real analytic functions $P$ and $Q$, 
	then either $P$ equals zero on $U$ or $Q$ equals zero on $U$.
	
\medskip	
Proceeding, we analyze the integrability conditions in Theorem \ref{mulwar}.
Assume that $\lambda_a$ and $\lambda_\alpha$ are mutually different of multiplicities $r_1$ and $r_2$, i.e.,
\[
\lambda_{a}:=\lambda_{2}= \cdots= \lambda_{r_1+1} \quad {\rm and} \quad
\lambda_{\alpha}:=\lambda_{r_1+2}= \cdots=\lambda_{r_1+r_2+1};
\]
Here we make the following conventions on the range of indices:
\[
2 \leq a, b,  \cdots\leq ({r_1+1}); \quad ({r_1+2}) \leq \alpha, \beta,  \cdots\leq ({r_1+r_2+1});\quad 
\] 
Denote $\xi_a:=X$ and $ \xi_\alpha:=Y$, from Section 3, and then they satisfy the following integrability conditions:
\begin{equation}\label{4.1}
X'+X^2=Y'+Y^2=\xi'_i+\xi^2_i,
\end{equation}
\begin{equation}\label{4.2}
\lambda_{1}=-f''+\rho=-(n-1)\left( X'+X^2 \right),
\end{equation}
\begin{equation}\label{4.3}
\lambda_{a}=-f'X+\rho=-( X'+X^2)+X^2+(r_1-1)\frac{k_1}{h^2_1} 
-X\sum^n_{i=2}\xi_i,
\end{equation}
\begin{equation}\label{4.4}
\lambda_{\alpha}=-f'Y+\rho=-(Y'+Y^2)+Y^2+(r_2-1)\frac{k_2}{h^2_2}
-Y\sum^n_{i=2}\xi_i
\end{equation}
 and 
\begin{equation}\label{4.5}
\lambda'_a-\left(\lambda_1-\lambda_a \right)X=\lambda'_\alpha-\left(\lambda_1-\lambda_\alpha \right)Y.
\end{equation}
By using the above basic facts, it is easy to get the following equations:

\begin{lemma}\label{lemma4.1}	
	Let $(M^n, g, f)$, $(n\geq 4)$, be an $n$-dimensional gradient Ricci soliton with harmonic Weyl curvature  and multiply warped product metric of eigenspaces with the Ricci tensor,
in some neighborhood $U$ of $p\in M_A \cap \{ \nabla f \neq 0  \}$, 
suppose $\lambda_a$ and $\lambda_\alpha$ are mutually different Ricci eigenvalues with multiplicities $r_1$ and $r_2$.
The following identities hold:
\begin{equation}\label{4.6}
(r_1-1)\frac{k_1}{h^2_1}-(r_2-1)\frac{k_2}{h^2_2}=(X-Y)\left[ \sum^n_{i=2}\xi_i-(X+Y)-f' \right],
\end{equation}
\begin{equation}\label{4.7}
 (r_1-1)\frac{k_1}{h^2_1}+ (r_2-1)\frac{k_2}{h^2_2}
 =2(X'+X^2 ) +\rho+\sum^n_{i=2}\xi^2_i-(X^2+Y^2),
\end{equation}
\begin{equation}\label{4.8}
\begin{aligned}
&(r_1-1)\frac{k_1}{h^2_1}X- (r_2-1)\frac{k_2}{h^2_2}Y\\
=&(X-Y)\left\{ (X'+X^2 ) +\rho+(X+Y)\left[ \sum^n_{i=2}\xi_i-(X+Y) -f'\right]+XY\right\},
\end{aligned}
\end{equation}
\begin{equation}\label{4.9}
-(r_1-1)\frac{k_1}{h^2_1}Y+(r_2-1)\frac{k_2}{h^2_2}X
=(X-Y)[(X'+X^2 )+\rho+XY],
\end{equation}
\begin{equation}\label{4.10}
\begin{aligned}
&(r_1-1)\frac{k_1}{h^2_1}X- (r_2-1)\frac{k_2}{h^2_2}Y\\
=&(X-Y)\left[(X'+X^2 )+\sum^n_{i=2}\xi^2_i-(X^2+Y^2)-XY\right],
\end{aligned}
\end{equation}
\begin{equation}\label{4.11}
\begin{aligned}
&\left[(r_1-1)\frac{k_1}{h^2_1}- (r_2-1)\frac{k_2}{h^2_2} \right](X+Y)\\
=&(X-Y)\left[\sum^n_{i=2}\xi^2_i-(X^2+Y^2)-2XY-\rho \right]
\end{aligned}
\end{equation}	
and 
 \begin{equation}\label{4.12}
 \sum^n_{i=2}\xi^2_i-\rho=(X+Y)\left( \sum^n_{i=2}\xi_i -f' \right).
 \end{equation}
\end{lemma}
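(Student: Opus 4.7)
The plan is to derive the seven identities \eqref{4.6}--\eqref{4.12} by algebraic manipulation of the integrability conditions \eqref{4.1}--\eqref{4.5}, augmented by the ODE satisfied by the warped curvature factors. First, I isolate from \eqref{4.3} and \eqref{4.4} the clean expressions
\[
P := (r_1-1)\tfrac{k_1}{h_1^2} = \rho + X' + X(S-f'), \qquad Q := (r_2-1)\tfrac{k_2}{h_2^2} = \rho + Y' + Y(S-f'),
\]
where $S := \sum_{i=2}^n \xi_i$. Using \eqref{4.1} to substitute $X' = C - X^2$ and $Y' = C - Y^2$ with $C$ the common value of $\xi'_i + \xi_i^2$, the combinations $P-Q$, $XP-YQ$, and $XQ-YP$ yield \eqref{4.6}, \eqref{4.8}, and \eqref{4.9} respectively by straightforward expansion.

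The main obstacle is \eqref{4.12}, which cannot be deduced from \eqref{4.3}--\eqref{4.4} alone, because those two relations simply define $P$ and $Q$ and any linear combination of them is a tautology. The extra input I will use is that $k_1, k_2$ are constants while $h_1'/h_1 = X$, $h_2'/h_2 = Y$, which gives the ODEs $P' = -2XP$ and $Q' = -2YQ$. Differentiating $P = \rho + X' + X(S-f')$ and substituting $X'' = C' - 2XX'$, $S' = (n-1)C - T$ where $T := \sum \xi_i^2$ (obtained by summing \eqref{4.1} over $i$), and $f'' = \rho + (n-1)C$ from \eqref{3.13}, a short computation reduces $P' + 2XP = 0$ to
\[
C' + (C+X^2)(S-f') - X(T-\rho) = 0.
\]
The identical calculation applied to $Q$ produces the same relation with $Y$ in place of $X$; subtracting eliminates $C'$ and yields
\[
(X-Y)\bigl[(X+Y)(S-f') - (T-\rho)\bigr] = 0,
\]
so that $X \neq Y$ on $U$ forces \eqref{4.12}.

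The three remaining identities then follow from \eqref{4.12} by substitution. Specifically, \eqref{4.7} is the sum $P+Q$ after replacing $(X+Y)(S-f')$ by $T-\rho$; \eqref{4.10} is the formula for $XP-YQ$ after the same rewriting inside the bracket; and \eqref{4.11} is \eqref{4.6} multiplied by $(X+Y)$ with $(X+Y)(S-f')$ again replaced by $T-\rho$ on the right-hand side.
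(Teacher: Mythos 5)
Your proposal is correct; I verified the algebra behind each of the seven identities, including the reduction of $P'+2XP=0$ to $C'+(C+X^2)(S-f')-X(T-\rho)=0$ and the cancellation of $C'$ upon subtracting the $Y$-version. The route is genuinely different from the paper's in its organization. The paper obtains \eqref{4.6} by subtracting \eqref{4.3} from \eqref{4.4}, then differentiates the resulting relation for $f'(X-Y)$ in two ways to get \eqref{4.7}, invokes the harmonic condition \eqref{4.5} to produce \eqref{4.8}, and derives \eqref{4.9}--\eqref{4.12} by pairwise combinations, with \eqref{4.12} appearing last as the difference of \eqref{4.10} and \eqref{4.8}. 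You instead note that \eqref{4.6}, \eqref{4.8} and \eqref{4.9} are purely algebraic consequences of the expressions for $(r_1-1)k_1/h_1^2$ and $(r_2-1)k_2/h_2^2$ extracted from \eqref{4.3}--\eqref{4.4} together with \eqref{4.1}; in particular you never invoke \eqref{4.5}, which is in fact redundant at this stage since $\lambda_a=\rho-f'\xi_a$, $\lambda_1=\rho-f''$ and \eqref{3.11} already imply \eqref{3.12}. You then isolate the single genuinely differential input --- the ODE $\left(k_i/h_i^2\right)'=-2\xi_i\, k_i/h_i^2$ coming from $h_i'/h_i=\xi_i$ with $k_i$ constant --- to obtain \eqref{4.12} first, after which \eqref{4.7}, \eqref{4.10} and \eqref{4.11} follow by substitution. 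The paper uses the same ODE implicitly when differentiating its equation \eqref{4.13}, so the two proofs rest on identical inputs; yours makes the logical dependencies more transparent (exactly one differentiation beyond pure algebra, and the harmonicity relation \eqref{4.5} carries no independent information here), while the paper's ordering is the one that naturally generalizes its treatment of the two-eigenvalue case in Section 5.
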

\begin{proof}
 First, subtracting \eqref{4.3} from \eqref{4.4} gives us 
\begin{equation}
	\begin{aligned}\label{4.13}
	\lambda_{\alpha}-\lambda_a=&f'(X-Y)\\
	=&(X-Y)\left[ \sum^n_{i=2}\xi_i-(X+Y) \right]
	-\left[(r_1-1)\frac{k_1}{h^2_1}-(r_2-1)\frac{k_2}{h^2_2}\right],
\end{aligned}
\end{equation}
which implies \eqref{4.6}.
Noting that $h'_1/h_1=X$ and $h'_2/h_2=Y$
and differentiating \eqref{4.13} lead to
\begin{equation}\label{4.14}
\begin{aligned}
&( \lambda_{\alpha}-\lambda_a )'=\left[f'(X-Y)\right]'\\
=&(X-Y)\Bigg\{(n-3)(X'+X^2 )-(X+Y)\left[ \sum^n_{i=2}\xi_i-(X+Y) \right]\\
&-\left[ \sum^n_{i=2}\xi^2_i-(X^2+Y^2)\right] \Bigg\}
+2\left[ (r_1-1)\frac{k_1}{h^2_1}X-(r_2-1)\frac{k_2}{h^2_2}Y\right].
\end{aligned}
\end{equation}
On the other hand, applying \eqref{4.1}, \eqref{4.2} and \eqref{4.13}, we see
\begin{equation}\label{4.15}
\begin{aligned}
&\left[f'(X-Y)\right]'=f''(X-Y)-f'(X-Y)(X+Y)\\
=&(X-Y)\left\{[(n-1)(X'+X^2 )+\rho]+(X+Y)\left[ \sum^n_{i=2}\xi_i-(X+Y) \right]   \right\}\\
&+(X+Y))\left[ (r_1-1)\frac{k_1}{h^2_1}-(r_2-1)\frac{k_2}{h^2_2}\right].
\end{aligned}
\end{equation}
Comparing with equations \eqref{4.14} and \eqref{4.15} gives us \eqref{4.7},
where the fact $X\neq Y$ was used. 

\smallskip
Next, we consider the harmonic condition. It follows from \eqref{4.2}, \eqref{4.3} and \eqref{4.4} that 
\begin{equation}
\begin{aligned}\label{4.16}
&(\lambda_1-\lambda_\alpha)Y-(\lambda_1-\lambda_a)X\\
=&(X-Y)\left\{(n-2)(X'+X^2 )-(X+Y)\left[ \sum^n_{i=2}\xi_i-(X+Y) \right]-XY\right\}\\
&+\left[ (r_1-1)\frac{2k_1}{h^2_1}X-(r_2-1)\frac{2k_2}{h^2_2}Y\right].
\end{aligned}
\end{equation}
Since $\lambda'_\alpha-\lambda'_a=\left(\lambda_1-\lambda_\alpha \right)Y-\left(\lambda_1-\lambda_a \right)X$ from  harmonic condition \eqref{4.5},
by comparing \eqref{4.15} and \eqref{4.16} we obtain \eqref{4.8}.
Putting
\[
f'(X-Y)
=(X-Y)\left[ \sum^n_{i=2}\xi_i-(X+Y) \right]
-\left[(r_1-1)\frac{k_1}{h^2_1}-(r_2-1)\frac{k_2}{h^2_2}\right]
\]
into \eqref{4.8} implies \eqref{4.9}.

On the other hand, we can obtain \eqref{4.10} by combining \eqref{4.14} and \eqref{4.16}.
Meanwhile, subtracting \eqref{4.10} from \eqref{4.9} shows \eqref{4.11}, while \eqref{4.12} is proved by subtracting \eqref{4.8} from \eqref{4.10}, and we have completed the proof of this lemma.
\end{proof}	
	
Proceeding, we will apply equations \eqref{4.1}, \eqref{4.2} and \eqref{4.12} to obtain the following desired result.		
\begin{theorem}\label{thm4.1}
Let $(M^n, g, f)$, $(n\geq 4)$, be an $n$-dimensional gradient Ricci soliton with harmonic Weyl curvature  and multiply warped product metric of eigenspaces with the Ricci tensor.
 Then in some neighborhood $U$ of $p \in M_A \cap \{ \nabla f \neq 0  \}$, the Ricci eigenvalues $\lambda_2,~\lambda_3,~\cdots,~\lambda_n$ cannot be more than two distinct. 
\end{theorem}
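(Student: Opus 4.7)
The plan is to argue by contradiction. Suppose that $\lambda_{2},\ldots,\lambda_{n}$ take at least three pairwise distinct values; pick three of them and denote the corresponding $\xi$-values by $X,Y,Z$ (pairwise distinct, since $\xi_{i}=(\rho-\lambda_{i})/f'$ by (3.6)). I will extract two strong scalar identities from three applications of (4.12), then combine them with the common integrability relations (4.1), (4.2) together with (3.3) to force simultaneously $\rho=0$ and $\sum\xi_{i}^{2}=0$, producing a contradiction with the distinctness of $X,Y,Z$.

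Concretely, I would first apply (4.12) of Lemma 4.1 to each of the three pairs $(X,Y)$, $(X,Z)$, $(Y,Z)$. Setting $S:=\sum_{i=2}^{n}\xi_{i}-f'$ and $T:=\sum_{i=2}^{n}\xi_{i}^{2}-\rho$, the three instances read $T=(X+Y)S$, $T=(X+Z)S$ and $T=(Y+Z)S$ (the sums and $\rho,f'$ are global at the point, hence the same across the three pairs). Subtracting any two of them cancels $T$ and produces a relation of the form $(Y-Z)S=0$, $(X-Z)S=0$ or $(X-Y)S=0$; pairwise distinctness of $X,Y,Z$ therefore forces $S=0$, and then $T=0$, i.e.\
\[
\sum_{i=2}^{n}\xi_{i}=f',\qquad\sum_{i=2}^{n}\xi_{i}^{2}=\rho.
\]
Next, (4.1) says that $\Psi:=\xi_{i}'+\xi_{i}^{2}$ is the same function of $s$ for every $i\in\{2,\ldots,n\}$. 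Summing this common value over $i=2,\ldots,n$ and substituting the two identities just obtained yields $(n-1)\Psi=f''+\rho$. On the other hand, (4.2) gives $(n-1)\Psi=-\lambda_{1}$, while (3.3) gives $\lambda_{1}=\rho-f''$, so $(n-1)\Psi=f''-\rho$ as well. Equating the two expressions for $(n-1)\Psi$ forces $\rho=0$. But then $\sum_{i=2}^{n}\xi_{i}^{2}=\rho=0$, and since every $\xi_{i}$ is real this compels $\xi_{i}\equiv 0$ for all $i\in\{2,\ldots,n\}$, contradicting the existence of three distinct values $X,Y,Z$ among them.

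The main difficulty I anticipate is spotting the correct combination of relations, rather than any analytic subtlety: Lemma 4.1 furnishes many identities (4.6)--(4.12) tangled with the warping functions $h_{\mu}$ and the Einstein constants $k_{\mu}$, and a case analysis on the multiplicities $r_{1},r_{2},r_{3}$ or on the sign of $\rho$ would be lengthy. The argument above bypasses all of this by using only (4.12)---three times---together with the summed form of (4.1), the trace identity (4.2) and the Hamilton-type relation (3.3); once the identities $\sum\xi_{i}=f'$ and $\sum\xi_{i}^{2}=\rho$ are in hand the contradiction is purely algebraic, with no ODE integration required.
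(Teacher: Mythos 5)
Your proposal is correct and follows essentially the same route as the paper: three applications of (4.12) force $\sum_{i\ge 2}\xi_i=f'$ and $\sum_{i\ge 2}\xi_i^2=\rho$, and then differentiating the first identity while using (4.1) and (4.2) yields $\sum_{i\ge 2}\xi_i^2=-\rho$, hence $\sum_{i\ge 2}\xi_i^2=0$, contradicting the distinctness of $X,Y,Z$. Your bookkeeping via $(n-1)\Psi$ is just a slightly more explicit phrasing of the paper's final step.
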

	
\begin{proof}
If not, we assume that $\lambda_2,~\lambda_3,~\cdots,~\lambda_n$ are at least three mutually different, and denote by $\lambda_a$, $\lambda_\alpha$ and $\lambda_p$ with multiplicities $r_1$, $r_2$ and $r_3$.
For convenience, we also denote $\xi_a:=X$, $\xi_\alpha:=Y$ and  $\xi_{p}:=Z$. by \eqref{4.12}, with the assumption of Lemma \ref{lemma4.1}, we see that
\[
\sum^n_{i=2}\xi^2_i-\rho=(X+Y)\left( \sum^n_{i=2}\xi_i -f' \right)
=(X+Z)\left( \sum^n_{i=2}\xi_i -f' \right)
=(Y+Z)\left( \sum^n_{i=2}\xi_i -f' \right).
\]
Therefore, it follows that
\begin{equation}\label{4.17}
\sum^n_{i=2}\xi_i =f' 
\end{equation}
since $X$, $Y$ and $Z$ are distinct, and then we have
\begin{equation}\label{4.18}
\sum^n_{i=2}\xi^2_i =\rho.
\end{equation}
 On the other hand, by \eqref{4.1}, \eqref{4.2} and differentiating \eqref{4.17}, we obtain 
\[
-\sum^n_{i=2}\xi^2_i =\rho,
\]
hence, which yields $\sum^n_{i=2}\xi^2_i=0$ by combining with \eqref{4.18}.
This is a contradiction since $X,~Y,~Z$ are distinct.
We have completed the proof of this theorem. 
\end{proof}

\section{The local structure of the case with two distinct Ricci-eigenvalues}
	
	In this section we begin to study the case when there are exactly two distinct Ricci values in the eigenvalues 
  $\lambda_2, \cdots, \lambda_n$. Types {\rm (ii)} and {\rm (iii)} of Theorem \ref{local} come from this section.
	
	First of all, we need the following lemmas to prepare for the local structure of the case.
	
	\begin{lemma}\label{lemma5.1} 
		Let $(M^n, g, f)$, $(n\geq 4)$, be an $n$-dimensional gradient Ricci soliton with harmonic Weyl curvature.
		Assume there are exactly two distinct values in the Ricci eigenvalues $\lambda_2, \cdots, \lambda_n$, denoted by $\lambda_a$ and $\lambda_\alpha$ of multiplicities $r_1$ and $r_2:=n-r_1-1$ in some neighborhood $U$ of $p \in M_A \cap \{ \nabla f \neq 0  \}$.
		Then the functions $X$ and $Y$ satisfy the following equations:
		\begin{equation}\label{5.1}
	(n-1)XY+\rho=f'(X+Y),
		\end{equation}
	\begin{equation}\label{5.2}
		X'+X^2+XY=0
		\end{equation}		
{\rm and}		
\begin{equation}\label{5.3}
		XY[(r_1-1)X^2+(r_2-1)Y^2-2XY-\rho]=0.
\end{equation}	
	\end{lemma}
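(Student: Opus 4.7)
The plan is to first derive (5.1) purely algebraically from Lemma 4.1, then to obtain (5.2) by differentiating (5.1) and invoking (4.2), and finally to obtain (5.3) by combining the harmonic-Weyl identity (3.12) with a direct computation of $R'$.

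\textbf{Deriving (5.1).} Under the present two-eigenvalue hypothesis, $\sum_{i=2}^n\xi_i=r_1X+r_2Y$ and $\sum_{i=2}^n\xi_i^2=r_1X^2+r_2Y^2$, while $r_1+r_2=n-1$. Inserting these into the already-established identity (4.12) of Lemma 4.1 and collecting the $XY$ cross-term gives $f'(X+Y)=(n-1)XY+\rho$, which is (5.1).

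\textbf{Deriving (5.2).} Set $u:=X+Y$, $v:=X-Y$, so (4.1) becomes $X'-Y'=-uv$ and $(XY)'=u(u'+v^2)/2$. Differentiating (5.1) in $s$, substituting $f''=\rho+(n-1)(X'+X^2)$ from (4.2) --- which in the $(u,v)$ variables reads $f''=\rho+(n-1)(2u'+u^2+v^2)/4$ --- and eliminating $(n-1)(u^2-v^2)/4$ by means of (5.1) itself, the identity collapses after cancellation to
\[
f'\bigl(u'+u^2\bigr)=0.
\]
Since $f'=|\nabla f|\neq 0$ on our neighborhood of $\{\nabla f\neq 0\}$, this forces $u'+u^2=0$. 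Combining this with $X'-Y'=-uv$ and solving the resulting $2\times 2$ linear system for $X', Y'$ yields $X'=-Xu$ and $Y'=-Yu$; hence $X'+X^2+XY=X(-u+X+Y)=0$, which is (5.2).

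\textbf{Deriving (5.3).} From (5.2) and (3.13), $\lambda_1=-(n-1)(X'+X^2)=(n-1)XY$ and $f''=\rho-(n-1)XY$. Plugging $\lambda_a=-f'X+\rho$ and $\lambda_1=(n-1)XY$ into the harmonicity condition (3.12) and using $f''+\lambda_1=\rho$, the left-hand side telescopes to $-f'(X'+X^2)=f'XY$; thus $R'=2(n-1)f'XY$. On the other hand, writing $R=\lambda_1+r_1\lambda_a+r_2\lambda_\alpha$ and differentiating directly, using (5.1), (5.2), and the resulting $\lambda_a'=2(n-1)X^2Y$, $\lambda_\alpha'=2(n-1)XY^2$, I expect
\[
R'=2(n-1)XY\bigl[(r_1-1)X+(r_2-1)Y\bigr].
\]
Equating the two expressions for $R'$ gives $XY\bigl[f'-(r_1-1)X-(r_2-1)Y\bigr]=0$. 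In the subcase where the bracket vanishes, substituting $f'=(r_1-1)X+(r_2-1)Y$ back into (5.1) forces $(r_1-1)X^2+(r_2-1)Y^2-2XY-\rho=0$. Hence in either subcase $XY\bigl[(r_1-1)X^2+(r_2-1)Y^2-2XY-\rho\bigr]=0$, which is (5.3).

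\textbf{Degenerate subcase and main obstacle.} Step 2 implicitly uses $u\not\equiv 0$ to invert the linear system for $X', Y'$; when $u\equiv 0$, (4.1) forces $X'=Y'=0$, so (5.2) holds trivially and (5.1) reduces to an algebraic identity, from which a short direct check confirms (5.3). Since $g$ and $f$ are real analytic \cite{Iv,HPW}, either $u\equiv 0$ or $u$ is nonzero on a dense open subset, so these two scenarios are exhaustive. The main obstacle is conceptual rather than computational: the algebraic system (4.6)--(4.11) turns out to be redundant once (5.1) is available (each identity is recoverable from the others together with (5.1) and the structural formulas (4.3), (4.4)), so the new independent information needed for (5.2) and (5.3) has to come from differentiation. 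The delicate point is arranging the differentiation of (5.1) so that the non-trivial $f'$-term isolates cleanly as $f'(u'+u^2)$, which is what unlocks (5.2) and, through it, (5.3).
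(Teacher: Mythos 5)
Your proof is correct, and I verified each computation. For (5.1) and (5.2) you follow essentially the paper's own route: (5.1) is read off by expanding the specialization of (4.12) to two eigenvalues (the paper's (5.10)), and (5.2) comes from differentiating (5.1) and substituting $f''=\rho+(n-1)(X'+X^2)$ from (4.2) together with (4.1); your $(u,v)$ bookkeeping is just a change of variables for the same cancellation. (Your worry about inverting the linear system when $u\equiv 0$ is unnecessary: once you have $f'(u'+u^2)=0$ you are done, since $u'+u^2=2(X'+X^2+XY)$, so the degenerate subcase discussion can be dropped.) For (5.3) your route genuinely differs from the paper's. The paper stays inside the algebraic system (5.4)--(5.11): it differentiates (5.5) to get (5.11), solves (5.5) and (5.9) for $(r_1-1)k_1h_1^{-2}(X+Y)$ and $(r_2-1)k_2h_2^{-2}(X+Y)$, and substitutes back into (5.11) multiplied by $(X+Y)$ to land on $2XY[(r_1-1)X^2+(r_2-1)Y^2-2XY-\rho]=0$. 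You instead compute $R'$ twice --- once from the harmonicity identity (3.12), which telescopes to $R'=2(n-1)f'XY$, and once by differentiating the trace $R=\lambda_1+r_1\lambda_a+r_2\lambda_\alpha$ using (5.1) and (5.2), which gives $R'=2(n-1)XY\left[(r_1-1)X+(r_2-1)Y\right]$ --- and then convert the resulting constraint $XY\left[f'-(r_1-1)X-(r_2-1)Y\right]=0$ into (5.3) by substituting back into (5.1); this last step is pointwise, so no analyticity is needed. Both expressions for $R'$ check out. Your version buys a cleaner derivation that avoids the warped-product constants $k_i/h_i^2$ altogether; note that your intermediate bracket condition is, via the paper's (5.4), exactly the condition $(r_1-1)k_1h_1^{-2}=(r_2-1)k_2h_2^{-2}$ that the paper isolates afterwards in Lemma 5.3(i), so the two arguments are recording the same dichotomy in different coordinates.
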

	
	\begin{proof}
In this case, equations \eqref{4.6}-\eqref{4.12} in Lemma \ref{lemma4.1} become the following basic identities:
\begin{equation}\label{5.4}
(r_1-1)\frac{k_1}{h^2_1}-(r_2-1)\frac{k_2}{h^2_2}=(X-Y)[ (r_1-1)X+(r_2-1)Y-f' ],
\end{equation}
\begin{equation}\label{5.5}
(r_1-1)\frac{k_1}{h^2_1}+ (r_2-1)\frac{k_2}{h^2_2}
=2(X'+X^2 ) +\rho+[ (r_1-1)X^2+(r_2-1)Y^2 ],
\end{equation}
\begin{equation}\label{5.6}
\begin{aligned}
&(r_1-1)\frac{k_1}{h^2_1}X- (r_2-1)\frac{k_2}{h^2_2}Y\\
=&(X-Y)\{ (X'+X^2 ) +\rho+(X+Y) [ (r_1-1)X+(r_2-1)Y-f']+XY\},
\end{aligned}
\end{equation}
\begin{equation}\label{5.7}
-(r_1-1)\frac{k_1}{h^2_1}Y+(r_2-1)\frac{k_2}{h^2_2}X
=(X-Y)[(X'+X^2 )+\rho+XY],
\end{equation}
\begin{equation}\label{5.8}
\begin{aligned}
&(r_1-1)\frac{k_1}{h^2_1}X- (r_2-1)\frac{k_2}{h^2_2}Y\\
=&(X-Y)\left[(X'+X^2 )+(r_1-1)X^2+(r_2-1)Y^2-XY\right],
\end{aligned}
\end{equation}
\begin{equation}\label{5.9}
\begin{aligned}
&\left[(r_1-1)\frac{k_1}{h^2_1}- (r_2-1)\frac{k_2}{h^2_2} \right](X+Y)\\
=&(X-Y)\left[(r_1-1)X^2+(r_2-1)Y^2-2XY-\rho \right]
\end{aligned}
\end{equation}	
and 
\begin{equation}\label{5.10}
r_1X^2+r_2Y^2-\rho=(X+Y)\left( r_1X+r_2Y-f' \right).
\end{equation}
We can immediately simplify \eqref{5.10} to get \eqref{5.1}.
By \eqref{4.1}, \eqref{4.2} and differentiating \eqref{5.1}, we obtain \eqref{5.2}.

	Next, by \eqref{4.1}, \eqref{5.2} and differentiating \eqref{5.5}, we see	
	\begin{equation}\label{5.11}
	\begin{aligned}
	&(r_1-1)\frac{k_1}{h^2_1}X+(r_2-1)\frac{k_2}{h^2_2}Y\\
	=&-2XY(X+Y)+[(r_1-1)X^2(X+Y)+(r_2-1)Y^2(X+Y)]\\
	=&(X+Y)[(r_1-1)X^2+(r_2-1)Y^2-2XY].
	\end{aligned}
	\end{equation}	
	Equations \eqref{5.9} and \eqref{5.5} yield that
	\[
	(r_1-1)\frac{k_1}{h^2_1}(X+Y)=[(r_1-1)X^2+(r_2-1)Y^2-2XY]X+\rho Y
	\]
	and
	\[
	(r_2-1)\frac{k_2}{h^2_2}(X+Y)=[(r_1-1)X^2+(r_2-1)Y^2-2XY]Y+\rho X.
	\]
 Multiplying both sides of \eqref{5.11} by $(X+Y)$ and then putting the above two equation into it, we obtain that
	\[
	2XY[(r_1-1)X^2+(r_2-1)Y^2-2XY-\rho]=0.
	\]
	Therefore, the proof of this lemma is completed.
\end{proof}

\begin{lemma}\label{lemma5.2} 
	Let $(M^n, g, f)$, $(n\geq 4)$, be an $n$-dimensional gradient Ricci soliton with harmonic Weyl curvature which satisfies the hypothesis of Lemma \ref{lemma5.1}.
	Then functions $X$ and $Y$ also satisfy the following inequalities
	\begin{equation}\label{5.12}
     X+Y\neq0
	\end{equation}
and
	\begin{equation}\label{5.13}
	(n-1)XY+\rho \neq 0.
	\end{equation}
\end{lemma}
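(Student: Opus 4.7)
My plan is to prove both inequalities by contradiction, using the real analyticity of $g$ and $f$ to reduce each scenario to an open subset of $U$ on which the purported equality holds identically. First, \eqref{5.13} follows at once from \eqref{5.12}: if $(n-1)XY + \rho \equiv 0$ on some open set, then \eqref{5.1} forces $f'(X+Y) = 0$, and since $f' = |\nabla f| \neq 0$ on $U$, this gives $X + Y \equiv 0$, contradicting \eqref{5.12}. Hence the genuine task is to establish \eqref{5.12}.

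To prove \eqref{5.12}, suppose toward a contradiction that $X + Y \equiv 0$ on some open set $V \subset U$, so $Y = -X$ there. Substituting into \eqref{5.1} yields $(n-1)X^2 = \rho$, a constant. If $\rho \le 0$, then $X = Y \equiv 0$, and \eqref{4.3}--\eqref{4.4} give $\lambda_a = \lambda_\alpha = \rho$, contradicting the hypothesis that $\lambda_a$ and $\lambda_\alpha$ are distinct. Hence I may assume $\rho > 0$, in which case $X$ is a nonzero constant with $X' = 0$ (automatically compatible with \eqref{5.2}).

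Next I would exploit the warped-product structure from Theorem \ref{mulwar}: since $h_1'/h_1 = X$ and $h_2'/h_2 = -X$ are nonzero constants, the warping functions take the explicit form $h_1(s) = A\, e^{Xs}$ and $h_2(s) = B\, e^{-Xs}$ for positive constants $A, B$. Substituting $X' = 0$, $Y = -X$, $r_1 + r_2 = n-1$, and $(n-1)X^2 = \rho$ into \eqref{5.5}, the right-hand side collapses to $(n-1)X^2 + \rho = 2\rho$, giving
\[
\frac{(r_1-1)k_1}{A^2}\, e^{-2Xs} + \frac{(r_2-1)k_2}{B^2}\, e^{2Xs} = 2\rho.
\]
Since $X \neq 0$, the functions $e^{-2Xs},\, 1,\, e^{2Xs}$ are linearly independent on any interval, so $(r_1-1)k_1 = 0$, $(r_2-1)k_2 = 0$, and $2\rho = 0$, contradicting $\rho > 0$. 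The degenerate subcases $r_1 = 1$ or $r_2 = 1$ pose no new difficulty because the corresponding summand simply vanishes.

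The main obstacle is that the core system \eqref{5.1}--\eqref{5.3} remains internally consistent under the ansatz $Y = -X$ with $X$ a nonzero constant and $\rho > 0$; no contradiction is visible from these three equations alone. The necessary extra information must come from an independent Einstein-type identity produced by Lemma \ref{lemma4.1}, namely \eqref{5.5}, where, once combined with the explicit exponential profiles of $h_1$ and $h_2$, the requirement that $e^{\pm 2Xs}$ sum to a constant supplies the rigidity that breaks the degeneracy.
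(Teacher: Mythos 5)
Your proposal is correct, and the reduction of \eqref{5.13} to \eqref{5.12} via \eqref{5.1} and $f'\neq 0$ is exactly the paper's. For \eqref{5.12} itself, however, you take a genuinely different route. The paper also starts from $Y=-X$, $X'=0$, $X\neq 0$ and $(n-1)X^2=\rho$, but then extracts the contradiction by differentiating the eigenvalue identity \eqref{4.3}: since $X$ is constant, $-f''X=-2(r_1-1)\frac{k_1}{h_1^2}X$, which together with $f''=2\rho$ (from \eqref{4.2}) gives $(r_1-1)\frac{k_1}{h_1^2}=\rho=(r_2-1)\frac{k_2}{h_2^2}$; feeding this into \eqref{5.4} makes $f'=(r_1-r_2)X$ constant, so $f''=0$, hence $\rho=0$ and $X=0$, a contradiction. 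You instead integrate $h_1'/h_1=X$, $h_2'/h_2=-X$ explicitly and plug the exponential profiles into the single identity \eqref{5.5}, whose right-hand side collapses to $2\rho$, so that linear independence of $e^{-2Xs}$, $1$, $e^{2Xs}$ forces $\rho=0$ directly. Both arguments exploit the same underlying tension (the curvature terms $k_i/h_i^2$ cannot coexist with a constant nonzero $X$), and both are complete; yours buys a slightly more transparent, one-shot contradiction from a single identity plus linear algebra of exponentials, while the paper's stays within the differentiation-of-integrability-conditions style used throughout Sections 4--6 and never needs to solve for $h_i$. Your preliminary case split on the sign of $\rho$ is harmless but superfluous: $X\neq Y$ and $Y=-X$ already force $X\neq 0$, whence $\rho=(n-1)X^2>0$ automatically.
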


\begin{proof}
	First, if $X+Y=0$, by combining with $X'+X^2=Y'+Y^2$, we have $X'=Y'=0$.
	$X$ and $Y$ are nonzero constant since they are distinct.
	Plugging them into \eqref{5.1}, then
	\[
	(n-1)X^2-\rho=0.
	\]
	From \eqref{4.2}, we see $f''=2\rho$. It follows from \eqref{4.3} that
	\[
	-f'X+\rho=-(r_1-r_2)X^2+(r_1-1) \frac{k_1}{h^2_1}
	\]
	and then differentiating it, one can easily get $f''=2(r_1-1) \frac{k_1}{h^2_1}$. Therefore 
	\[
	(r_1-1) \frac{k_1}{h^2_1}=\rho.
	\]
	Similarly, it holds that $(r_2-1) \frac{k_2}{h^2_2}=\rho$. 
	Putting them into \eqref{5.4}, we see
	\[
	f'=(r_1-r_2)X
	\]
	is constant, which implies that $f''=0$.
	Hence
	\[
	(r_1+r_2)X^2=\rho=0
	\]
	and then $X=0$, which is a contradiction. Thus \eqref{5.12} is proved, and 
	\eqref{5.13} immediately follows from \eqref{5.1}. The proof of this lemma is completed. 	
\end{proof}

\begin{lemma}\label{lemma5.3} 	
	Let $(M^n, g, f)$, $(n\geq 4)$, be an $n$-dimensional gradient Ricci soliton with harmonic Weyl curvature
	which satisfies the hypothesis of Lemma \ref{lemma5.1}.  Then we have the following properties:
	{\item {\rm (i)}} The equality
$(r_1-1)\frac{k_1}{h^2_1}= (r_2-1)\frac{k_2}{h^2_2}$
holds if and only if 
	\begin{equation}\label{5.14}
	(r_1-1)X^2+(r_2-1)Y^2-2XY-\rho = 0.
	\end{equation}

	{\item {\rm (ii)}}
	If	$(r_1-1)\frac{k_1}{h^2_1}= (r_2-1)\frac{k_2}{h^2_2}$,
	then the Ricci soliton is steady {\rm (}i.e., $\rho=0${\rm )} and 
	\begin{equation}\label{5.15}
	(r_1-1)X^2+(r_2-1)Y^2-2XY= 0.
	\end{equation}
\end{lemma}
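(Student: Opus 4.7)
For part {\rm (i)}, the equivalence falls right out of equation \eqref{5.9}. I would simply note that by hypothesis $\lambda_a \neq \lambda_\alpha$, which forces $X \neq Y$, while Lemma \ref{lemma5.2} tells us $X+Y \neq 0$. Therefore in \eqref{5.9} both factors on the right are free to be divided through, and the vanishing of the left-hand bracket is equivalent to the vanishing of $(r_1-1)X^2+(r_2-1)Y^2-2XY-\rho$.

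For part {\rm (ii)}, assume $(r_1-1)\tfrac{k_1}{h_1^2}=(r_2-1)\tfrac{k_2}{h_2^2}=:K$. My plan is to compute $K$ in two different ways and then derive one further independent identity that pins down $\rho$. First, from \eqref{5.11} with $X+Y \neq 0$, I get
\begin{equation*}
K \;=\; (r_1-1)X^2+(r_2-1)Y^2-2XY.
\end{equation*}
Combined with part {\rm (i)}, this gives $K=\rho$. Next, using the vanishing of the left-hand side of \eqref{5.4} together with $X \neq Y$, I extract
\begin{equation*}
f' \;=\; (r_1-1)X+(r_2-1)Y.
\end{equation*}
Differentiating and substituting $Y' = X'+X^2 - Y^2$ from \eqref{4.1}, I obtain
\begin{equation*}
f'' \;=\; (n-3)X' + (r_2-1)(X-Y)(X+Y).
\end{equation*}

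Now I compare this with the Ricci soliton relation $f'' = (n-1)(X'+X^2)+\rho$ coming from \eqref{4.2}, and then substitute $X' = -X^2 - XY$ from \eqref{5.2}. After collecting terms and using $n-2-r_2 = r_1-1$, this reduces to the identity
\begin{equation*}
2XY \;=\; (r_1-1)X^2+(r_2-1)Y^2+\rho.
\end{equation*}
But by part {\rm (i)} we already have $(r_1-1)X^2+(r_2-1)Y^2=2XY+\rho$, so substituting yields $2XY = 2XY + 2\rho$, i.e.\ $\rho=0$. With this, \eqref{5.14} immediately collapses to \eqref{5.15}, finishing the proof.

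The only slightly delicate step is the algebraic reduction leading to the second expression for $2XY$: it requires carefully tracking the dimension identity $n=1+r_1+r_2$ and using the ODE \eqref{5.2} at the right moment. Everything else is a direct consequence of the identities assembled in Lemmas \ref{lemma5.1} and \ref{lemma5.2} together with the nonvanishing conditions $X\neq Y$ and $X+Y\neq 0$.
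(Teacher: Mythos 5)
Your proposal is correct and follows essentially the same route as the paper: part (i) by dividing \eqref{5.9} using $X\neq Y$ and \eqref{5.12}, and part (ii) by extracting $f'=(r_1-1)X+(r_2-1)Y$ from \eqref{5.4}, differentiating, and combining \eqref{4.1}, \eqref{4.2}, \eqref{5.2} with \eqref{5.14} to force $\rho=0$. The detour through \eqref{5.11} to show $K=\rho$ is harmless but unused in the rest of your argument.
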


\begin{proof}
	First, noting \eqref{5.12}, (i) immediately holds by \eqref{5.9}.
	
	Next, if $(r_1-1)\frac{k_1}{h^2_1}=(r_2-1)\frac{k_2}{h^2_2}$, from \eqref{5.4}, we have
	\[
	(r_1-1)X+(r_2-1)Y=f'.
	\]
	Then differentiating this equation gives us
	\[
	(r_1-1)X^2+(r_2-1)Y^2-2XY+\rho=0,
	\]
	where \eqref{4.1} and \eqref{4.2} were used.
	By combining with equation \eqref{5.14}, we immediately have (ii) holds. 
The proof of this lemma is completed. 	
\end{proof}
Proceeding we will discuss two cases according to 
whether one of the two Ricci-eigenvalues is of single multiplicity.

\subsection{The multiplicities of two Ricci-eigenvalues $\lambda_a$ and $\lambda_\alpha$ are more than one}
In this subsection, we will study the case when the multiplicities of two Ricci eigenvalues 
$\lambda_a$ and $\lambda_\alpha$ are more than one in some neighborhood $U$ of a point $p$ in $M_A \cap \{ \nabla f \neq 0  \}$.
We have the following local classification result, 
which forms type {\rm (ii)} of Theorem \ref{local} for $3\leq r\leq n-2$.
	
\begin{theorem} \label{twodis}
	Let $(M^n, g, f)$, $(n\geq 4)$, be an $n$-dimensional gradient Ricci soliton with harmonic Weyl curvature.
	Assume in some neighborhood $U$ of $p$ in $M_A \cap \{ \nabla f \neq 0  \}$, there are exactly two distinct values in the Ricci eigenvalues $\lambda_2, \cdots, \lambda_n$, denoted by $\lambda_a$ and $\lambda_\alpha$, where both of the multiplicities are bigger than one. 
	Then $(U,g,f)$ can not be steady (i.e., $\rho\neq0$). The two distinct eigenvalues are exactly  0 and $\rho$ with the multiplicities $(r+1)$ and $(n-r-1)$, respectively, where $2\leq r\leq n-3$. 
	Also, $\nabla f$ is a null Ricci-eigenvector.
	
	Moreover, $(U,g)$ is locally isometric to a domain in 
	$\mathbb{R}^{r+1}\times N^{n-r-1}$
	with $g= ds^2 + s^2d\mathbb{S}^2_{r}+\tilde{g} $,
	where $\left(N^{n-r-1}, \tilde{g}\right)$ 
	is an $(n-r-1)$-dimensional Einstein manifold
	with the Einstein constant $\rho\neq 0$.
	The potential function is given by $f = \frac{\rho}{2}s^2$ modulo a constant.
	\end{theorem}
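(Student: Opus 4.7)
The plan is to exploit the cubic identity \eqref{5.3}, which asserts $XY \cdot \bigl[(r_1-1)X^2 + (r_2-1)Y^2 - 2XY - \rho\bigr] = 0$. Since $X$ and $Y$ are analytic functions of the local variable $s$ by Lemma \ref{lemma3.3}, on the connected neighborhood $U$ either the bracket vanishes identically or $XY \equiv 0$. The bulk of the argument is to rule out the first alternative and then, in the second, unpack $X \equiv 0$ (say) into the explicit product structure.

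For the bracket case, suppose $(r_1-1)X^2 + (r_2-1)Y^2 - 2XY - \rho \equiv 0$. By Lemma \ref{lemma5.3}, this forces $\rho = 0$ together with the reduced relation $(r_1-1)X^2 + (r_2-1)Y^2 = 2XY$. Since $r_1, r_2 \geq 2$, AM--GM gives $(r_1-1)X^2 + (r_2-1)Y^2 \geq 2\sqrt{(r_1-1)(r_2-1)}\,|XY| \geq 2|XY| \geq 2XY$. The equation then forces all three inequalities to be equalities, which (since $X, Y$ are not both zero by \eqref{5.12}) requires $r_1 = r_2 = 2$, $X^2 = Y^2$, and $XY > 0$; these combine to give $X = Y$, contradicting $\lambda_a \neq \lambda_\alpha$. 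So this case is impossible.

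Therefore $XY \equiv 0$ on $U$, and by analyticity, up to relabelling the two eigenspaces we may take $X \equiv 0$. Substituting into the integrability conditions of Section~5 gives $\lambda_1 = -(n-1)(X' + X^2) = 0$ from \eqref{4.2}, $\lambda_a = -f'X + \rho = \rho$ from \eqref{4.3}, $\rho = f' Y$ from \eqref{5.1}, and $\lambda_\alpha = -f'Y + \rho = 0$ from \eqref{4.4}. Since $f' \neq 0$ on $\{\nabla f \neq 0\}$ and $Y \neq 0$ (else $X = Y$ violates distinctness), the soliton cannot be steady; i.e., $\rho \neq 0$. Hence the full Ricci spectrum is $\{0, \rho\}$, with multiplicities $r + 1$ (where $r := r_2 \geq 2$) on $\{0\}$ and $n - r - 1 = r_1 \geq 2$ on $\{\rho\}$, giving $2 \leq r \leq n - 3$; and $\nabla f$ lies in the $0$-eigenspace, i.e., is a null Ricci-eigenvector.

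Finally, with $\lambda_1 = 0$, \eqref{3.3} yields $f'' = \rho$, hence $f = \tfrac{\rho}{2} s^2$ up to constants (after translating $s$), and $f' = \rho s$, $Y = 1/s$. Since $X = 0$, the warping $h_1$ (with $h_1'/h_1 = X$) is constant and may be absorbed into $\tilde g_1$; and $h_2'/h_2 = Y = 1/s$ gives $h_2 = s$ after rescaling $\tilde g_2$. Theorem \ref{mulwar} then presents $g = ds^2 + \tilde g_1 + s^2 \tilde g_2$ with $(N_1, \tilde g_1)$ Einstein of constant $\rho$ (forced by $\lambda_a = \rho$). Substituting $Y = 1/s$, $h_2 = s$, and $\sum_{i=2}^n \xi_i = r/s$ into \eqref{3.14} on the $\lambda_\alpha$-eigenspace gives $(r-1)(k_2 - 1)/s^2 = 0$, so $k_2 = 1$; thus $(N_2, \tilde g_2)$ is Einstein of Einstein constant $r - 1$, matching the round unit sphere $\mathbb{S}^r$. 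The factor $ds^2 + s^2 \tilde g_2$ is then locally the polar expression of Euclidean $\mathbb{R}^{r+1}$, giving the claimed isometry to a domain in $\mathbb{R}^{r+1} \times N^{n-r-1}$. The main obstacle lies in the bracket-case elimination of paragraph two: it is the full strength $r_1, r_2 \geq 2$ that, via AM--GM, rules out the alternative configuration in which one multiplicity drops to one and yields instead the non-Euclidean warped-product type (iii) of Theorem \ref{local}, treated in the next subsection.
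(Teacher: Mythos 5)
Your argument tracks the paper's own proof almost step for step: the same dichotomy extracted from \eqref{5.3}, the same use of Lemma \ref{lemma5.3} to eliminate the case where the bracket vanishes (the paper completes the square, rewriting the bracket as $(r_1-2)X^2+(r_2-2)Y^2+(X-Y)^2>0$ and contradicting $\rho=0$, where you use AM--GM --- cosmetically different, equally valid), the same reduction to $XY\equiv 0$ via real analyticity, and the same integration of the ODEs yielding $\lambda_1=0$, $f''=\rho$, $f=\tfrac{\rho}{2}s^2$, and the eigenvalues $0$ and $\rho$ with the stated multiplicities. You merely swap which of $X,Y$ is taken to vanish, which only relabels the two factors.

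The one genuine leap is in your last step, where you identify the cone factor with flat $\mathbb{R}^{r+1}$. Computing from \eqref{3.14} that $(N_2,\tilde g_2)$ is Einstein with Einstein constant $r-1$ does not ``match the round unit sphere'' once $r\ge 4$: there exist non-round Einstein metrics with that normalization, and for such a link the cone $ds^2+s^2\tilde g_2$ is Ricci-flat but not flat, hence not the polar form of Euclidean space. So the inference ``Einstein constant $r-1$ $\Rightarrow$ round $\Rightarrow$ flat factor'' is not valid as written. At this point the paper argues differently: it observes that on the factor $\bar g=ds^2+s^2\tilde g_1$ one has $Hess_{\bar g}(f)=\rho\,\bar g$ with $f=\tfrac{\rho}{2}s^2$ proper and strictly convex, that the radial curvatures $R_{1a1b}$ vanish, and concludes the factor is a Gaussian soliton; that rigidity statement (a Tashiro/Obata-type argument through the critical point of $f$) is the intended mechanism for pinning down the round sphere, and is what you should invoke in place of the Einstein-constant comparison. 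Everything preceding that step in your proposal is correct and coincides with the paper's argument.
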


	\begin{proof}
	In this case, we denoted the two distinct Ricci eigenvalues by $\lambda_a$ and $\lambda_\alpha$ 
	with multiplicity $r_1=r$ and $r_2=(n-r-1)$, where $r_i\geq2,\ i=1,\ 2$. 
	Then it follows from Theorem \ref{mulwar} that 
	$(U,g)$ is isometric to a domain in 
	$\mathbb{R}\times N^{r}_1 \times N^{n-r-1}_2$
	with
	\[
     g= ds^2 + h^2_1(s)\tilde{g}_{1}+h^2_{2}(s) \tilde{g}_{2}, 
	\]
	where $(N_i, \tilde{g}_{i})$ is an $r_i$-dimensional Einstein manifold 
	with the Einstein constant $(r_i-1)k_i$ for $i=1,\ 2$.
	
Based on what we have discussed,  we first claim that one of the functions $X$ and $Y$ vanishes,
where $X:=\xi_a$ and $Y:=\xi_\alpha$ are the functions corresponding to the Ricci eigenvalues 
$\lambda_a$ and $\lambda_\alpha$, respectively.

In fact, if not, then neither X nor Y is zero. It follows from \eqref{5.3} that
		\[
		(r_1-1)X^2+(r_2-1)Y^2-2XY-\rho=0.
		\]
		Therefore,
		\[
		\rho=(r_1-1)X^2+(r_2-1)Y^2-2XY=(r_1-2)X^2+(r_2-2)Y^2+(X-Y)^2
		\]
		is positive since $r_1,~r_2\geq2$.
		On the other hand, from Lemma \ref{lemma5.3}, we see $\rho=0$,
		which is a contradiction.
	
	 Next, without loss of generality, we assume that $X\neq0$ and $Y=0$.
	$Y=0$ implies that $h_2$ is constant and $X^{'} + X^2=0$. Thus $X= \frac{1}{s-c_1}$ and 
	$h_1= c_{h_1} (s-c_1)$ after with integrating $X= \frac{h^{'}_1}{h_1}$ for constants $c_1$ and $c_{h_1}$.
	By \eqref{4.2}, we have
	\[
	\lambda_1=-f''+\rho=0,
	\]
	which yields that $f''=\rho$ and
	\[
	f(s) = \frac{1}{2} \rho (s-c_1)^2+C_1.
	\]
	Putting $Y=0$ into \eqref{4.3} and \eqref{4.4} respectively shows
	\begin{equation}\label{5.16}
	\lambda_a=-f'X+\rho=-(r-1)(X^2-\frac{k_1}{h^2_1} )
	\end{equation}
	and
    \[
   \lambda_\alpha=\rho=(n-r-2)\frac{k_1}{h^2_2}.
    \]
	Noting that $-f'X+\rho=0$, by \eqref{5.16}, we see $\lambda_a=0$
	and $\lambda_\alpha=\rho\neq0$. 
	
	Therefore, the Ricci curvature components and the scalar curvature are as follows:
	$R_{11} = R_{aa} =0$,
	$R_{\alpha\alpha}=\rho$, $R_{ij} =0$ 
	$(i \neq j)$ and 
	$R= (n-r-1)\rho $.
	
	 Furthermore, since $r\geq2$, \eqref{5.16} shows $X^2-\frac{k_1}{h^2_1}=0$, and then $\frac{k_1}{c^2_{h_1}}=1$.
	Hence, $(U,g)$ is isometric to a domain in 
	$\mathbb{R}^{1}\times N^{r}_1 \times N^{n-r-1}_2$
	with $g= ds^2 + s^2\tilde{g}_1+\tilde{g}_2 $,
	where $\left(N^{r}_1, \tilde{g}_1\right)$ and $\left(N^{n-r-1}_2, \tilde{g}_2\right)$ 
	are Einstein manifolds with the Einstein constants $(r-1)$ and $\rho\neq 0$, respectively.
	Finally, we consider the manifold $\mathbb{R}^{1}\times N^{r}_1$
	with $\bar{g}=ds^2 + s^2\tilde{g}_1$. From \eqref{3.5}, we have 
	$f_{1,1}=f_{a,a}=\rho\neq0$ and $f_{a,b}=f_{1,a}=0$, i.e.,
	\[
	Hess_{\bar{g}}(f)=\rho\bar{g}.
	\]
	This shows that $f$ is a proper strictly convex function.
	We also see that $f=\frac{\rho}{2}s^2$
	is a distance function from the unique minimum of $f$ up to a scaling. 
	It is easy to see that the radial curvatures vanish and then that
	the space $\left( \mathbb{R}^{1}\times N^{r}_1, \bar{g}, f\right)$ is 
	a Gaussian soliton. 
	This completes the proof of this theorem. 
\end{proof}

	\subsection{One of the multiplicities of two Ricci-eigenvalues $\lambda_a$ and $\lambda_\alpha$ is single}
In this subsection, we will study the case 
that one of the multiplicities of two Ricci-eigenvalues $\lambda_a$ and $\lambda_\alpha$ is single 
 in some neighborhood $U$ of $p$ in $M_A \cap \{ \nabla f \neq 0  \}$.
Without loss of generality, we assume that 
the multiplicity of Ricci-eigenvalue $\lambda_a$ is one, that means $r_1=1,~r_2=n-2\geq2$.
Then we shall give the local structure of a gradient Ricci soliton 
$\left(M^n, g, f \right) $ with harmonic Weyl curvature 
in this case according to the integrability condition.
Types {\rm (iii)} and {\rm (ii)} for $r=2$ in Theorem \ref{local} come from this subsection.

\smallskip
First of all, it follows from Theorem \ref{mulwar} that 
$(U,g)$ is isometric to a domain in 
$ L\times L_1 \times N^{n-2}_2$
with
\[
g= ds^2 + h^2_1(s)dt^2+h^2_2(s) \tilde{g}_{2}, 
\]
where $(N_2, \tilde{g}_{2})$ is an $(n-2)$-dimensional Einstein manifold 
with the Einstein constant $(n-3)k_2$.

\begin{lemma}\label{lemma5.5}
	Let $(M^n, g, f)$, $(n\geq 4)$, be an $n$-dimensional gradient Ricci soliton with harmonic Weyl curvature.
	in some neighborhood $U$ of $p$ in $M_A \cap \{ \nabla f \neq 0  \}$, 
	assume $\lambda_a$ and $\lambda_\alpha$ are mutually different Ricci eigenvalues with multiplicities $1$ and $n-2$.
	 Then 
	 \[
	 X:=\xi_a\neq0.
	 \] 
\end{lemma}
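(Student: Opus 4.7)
The plan is to argue by contradiction, assuming $X \equiv 0$ on a (possibly shrunken) connected neighborhood and then extracting two incompatible values of $\rho$. First I would note that since $(M, g, f)$ is real analytic in harmonic coordinates and $X = \xi_a$ is built from the analytic Ricci data, $X$ is analytic on $U$; thus either $X$ vanishes identically on a component of $U$ (the case to rule out) or $\{X \neq 0\}$ is open and dense, in which case the conclusion already holds after shrinking $U$.

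So suppose $X \equiv 0$ on $U$. Because $r_1 = 1$, the Einstein-constant term $(r-1)k/h^2$ in the warped-product formula \eqref{3.14} of Theorem \ref{mulwar} (equivalently equation \eqref{4.3}) drops out, leaving the pair of equalities
\begin{equation*}
\lambda_a \;=\; -f' X + \rho \;=\; -X' \;-\; X\sum_{i=2}^{n} \xi_i.
\end{equation*}
With $X \equiv 0$ (hence also $X' \equiv 0$), the left-hand side reduces to $\rho$ and the right-hand side to $0$, forcing $\rho = 0$. On the other hand Lemma \ref{lemma5.2} asserts $(n-1)XY + \rho \neq 0$, which under $X = 0$ becomes $\rho \neq 0$. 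This is the desired contradiction and yields $X \neq 0$.

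The only delicate point, and the step I would treat most carefully, is the analytic reduction to the case $X \equiv 0$ on an open set: all the formula manipulations above require $X$ identically zero on an open subset so that $X' \equiv 0$ as well, and we must not be fooled by isolated or lower-dimensional zeros of $X$. Once that reduction is in place, the contradiction is essentially automatic and does not require any of the deeper identities \eqref{5.1}--\eqref{5.3} of Lemma \ref{lemma5.1}; it uses only the two equivalent expressions for $\lambda_a$ supplied by the warped-product structure of Theorem \ref{mulwar} together with the non-vanishing assertion $(n-1)XY + \rho \neq 0$ from Lemma \ref{lemma5.2}.
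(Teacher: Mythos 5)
Your proposal is correct and follows essentially the same route as the paper: both arguments assume $X\equiv 0$, substitute into \eqref{4.3} with $r_1=1$ to force $\rho=0$, and then derive a contradiction from the relation $(n-1)XY+\rho=f'(X+Y)$. The only (immaterial) difference is at the last step, where the paper uses \eqref{5.1} directly to conclude $f'=0$ (impossible on $\{\nabla f\neq 0\}$), while you invoke its packaged consequence \eqref{5.13} from Lemma \ref{lemma5.2} to get $\rho\neq 0$; your explicit remark about reducing, via analyticity, to the case $X\equiv 0$ on an open set is a fair point that the paper leaves implicit in the Section~4 conventions.
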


\begin{proof}
	If $X=0$, then $Y\neq0$ since $X$ and $Y$ are distinct.
	$X=0$ implies $h_1$ is constant and $Y^{'} + Y^2=0$. Thus $Y= \frac{1}{s-c_1}$ and 
	$h_2= c_{h_2} (s-c_1)$ with integrating $Y= \frac{h^{'}_2}{h_2}$.
	Putting $X=0$ into \eqref{4.3} yields
	$\rho=0$. 
	Then it follows from \eqref{5.1} that $f'=0$, which is impossible.
	The proof of this lemma is completed. 
\end{proof}	

Next, we will discuss two cases according to 
whether the constant $k_2$ vanishes, which is equivalent to that
the $(n-2)$-dimensional Einstein manifold $(N_2, \tilde{g}_{2})$ is Ricci flat.
Then the local structure of both cases will be shown
according to the integrability condition.

\medskip
\noindent{\bf Subcase I.}  \quad $k_2=0$ 

For this subcase, the following result shows that the Ricci soliton is steady and it can be classified. 
This will give rise to type {\rm (iii)} of Theorem \ref{local}.

\begin{theorem} \label{twoone1}
	For a gradient Ricci soliton $\left(M^n, g, f \right) $ with harmonic Weyl curvature,
	assume in some neighborhood $U$ of $p$ in $M_A \cap \{ \nabla f \neq 0  \}$, 
	$(M^n,g)$ is locally isometric to a domain in 
	$L\times L_1\times N^{n-2}_2$
	with
	\[
	g= ds^2 + h^2_1(s)dt^2+h^2_2(s) \tilde{g}_{2}, 
	\]
	where the $(n-2)$-dimensional Einstein manifold $(N_2, \tilde{g}_{2})$ is Ricci flat.
	  Then $n\neq5$, the gradient Ricci soliton is steady {\rm (}i.e., $\rho=0${\rm)} and $g$ is locally isometric to the metric 
	$ds^2 + s^{\frac{2(n-3)}{n-1}} dt^2+ s^{\frac{4}{n-1}} \tilde{g}$ 
	on a domain of $\mathbb{R}^+\times \mathbb{R}\times N^{n-2}_2$, where $ \tilde{g}$ is Ricci flat. 
	Also, the potential function is given by $f=\frac{2(n-3)}{n-1} \log (s)$ modulo a constant.
	
	Furthermore, the Ricci curvature components and the scalar curvature are as follows;
	$R_{11} = \frac{2(n-3)}{(n-1)s^2}$,  $R_{22} = -\frac{2(n-3)^2}{(n-1)^2s^2}$, 
	$R_{\alpha\alpha}= -\frac{4(n-3)}{(n-1)^2s^2}$, $R_{ij} =0$ $(i \neq j)$ and 
	$R= -\frac{4(n-3)^2}{(n-1)^2s^2} $. 
	Hence the scalar curvature is negative and non-constant.
\end{theorem}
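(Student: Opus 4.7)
The plan is to feed the hypothesis $k_{2}=0$ (and the automatic vanishing that comes from $r_{1}=1$) into the basic identities of Lemma \ref{lemma5.1} and Lemma \ref{lemma5.3}, then explicitly solve the resulting ODEs for $X,Y,h_{1},h_{2}$, and $f$. There is no deep obstacle; the substance is bookkeeping together with one small case distinction that forces $n\neq 5$.

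First, since $r_{1}=1$ we have $(r_{1}-1)k_{1}/h_{1}^{2}=0$, and by assumption $(r_{2}-1)k_{2}/h_{2}^{2}=0$ as well. Lemma \ref{lemma5.3}(ii) then applies directly and gives two things at once: the soliton is steady ($\rho=0$), and equation \eqref{5.15} simplifies (using $r_{1}-1=0$, $r_{2}-1=n-3$) to
\begin{equation*}
(n-3)Y^{2}-2XY \;=\; Y\bigl[(n-3)Y-2X\bigr]\;=\;0.
\end{equation*}
Lemma \ref{lemma5.5} already tells us $X\neq 0$. If $Y=0$, then identity \eqref{5.1} (with $\rho=0$) reduces to $f' X = 0$, forcing $f'=0$, which contradicts the standing assumption $\nabla f\neq 0$. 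Hence $Y=\tfrac{2X}{n-3}$, which in particular requires $n>3$; moreover $X\neq Y$ (since $\lambda_{a}\neq\lambda_{\alpha}$) rules out $n=5$, since $X=Y$ exactly when $n-3=2$.

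Next I would substitute $Y=\tfrac{2X}{n-3}$ into the ODE \eqref{5.2}, obtaining $X'+\tfrac{n-1}{n-3}X^{2}=0$, which integrates to $X=\tfrac{n-3}{(n-1)s}$ after a translation of $s$, and therefore $Y=\tfrac{2}{(n-1)s}$. Integrating the logarithmic-derivative relations $h_{1}'/h_{1}=X$ and $h_{2}'/h_{2}=Y$ produces $h_{1}(s)=c_{1}s^{(n-3)/(n-1)}$ and $h_{2}(s)=c_{2}s^{2/(n-1)}$; the constants $c_{1},c_{2}$ are absorbed by rescaling the coordinate $t$ and the Ricci-flat factor metric $\tilde g$, yielding the stated warped-product form on $\mathbb{R}^{+}\times\mathbb{R}\times N^{n-2}$. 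The potential function is recovered from \eqref{5.1}: one computes $(n-1)XY=\tfrac{2(n-3)}{(n-1)s^{2}}$ and $X+Y=\tfrac{1}{s}$, so $f'=\tfrac{2(n-3)}{(n-1)s}$ and thus $f=\tfrac{2(n-3)}{n-1}\log s$ modulo a constant.

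Finally, plugging the closed-form expressions for $X,Y,f'$ into the identities \eqref{4.2}, \eqref{4.3}, \eqref{4.4} gives the Ricci eigenvalues
\begin{equation*}
\lambda_{1}=\tfrac{2(n-3)}{(n-1)s^{2}},\qquad \lambda_{a}=-\tfrac{2(n-3)^{2}}{(n-1)^{2}s^{2}},\qquad \lambda_{\alpha}=-\tfrac{4(n-3)}{(n-1)^{2}s^{2}},
\end{equation*}
and summing with the correct multiplicities $(1,1,n-2)$ produces $R=-\tfrac{4(n-3)^{2}}{(n-1)^{2}s^{2}}$, which is manifestly negative and non-constant for $n\geq 4$, $n\neq 5$. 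The only genuinely subtle point in the whole argument is the exclusion of $n=5$, which is forced by the distinctness $\lambda_{a}\neq\lambda_{\alpha}$ via the coincidence $X=Y$ at that dimension; everything else is direct integration of first-order ODEs.
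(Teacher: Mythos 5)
Your proof is correct and follows essentially the same route as the paper: both arguments feed $r_1=1$ and $k_2=0$ into Lemmas \ref{lemma5.1}, \ref{lemma5.3} and \ref{lemma5.5} to obtain $\rho=0$, $Y=\frac{2X}{n-3}$ and hence $n\neq 5$, and then integrate the resulting first-order ODEs to recover $X$, $h_1$, $h_2$, $f$ and the curvature components. The only cosmetic differences are that you exclude $Y=0$ via \eqref{5.1} rather than via \eqref{5.4}, and you derive $X'+\frac{n-1}{n-3}X^2=0$ directly from \eqref{5.2} instead of from $X'+X^2=Y'+Y^2$, which incidentally avoids the paper's implicit division by $n-5$.
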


\begin{proof} 
	Let $X:=\xi_a$ and $Y:=\xi_\alpha$ be the functions corresponding to the Ricci eigenvalues 
	$\lambda_a$ and $\lambda_\alpha$, respectively.
	
\noindent {\it Claim}.
	{\rm (i)} $Y\neq0$.
	
	\smallskip
\quad\quad \ {\rm (ii)} $\rho=0$.
	
	\smallskip
\quad\quad \ {\rm (iii)} $(n-3)Y-2X=0$. 
	
	In fact, in this case, we note that $r_1=1$ and $k_2=0$. Then \eqref {5.4} gives us
	$(n-3)Y-f'=0$, which shows $Y\neq0$.
Meanwhile, from (ii) of Lemma \ref{lemma5.3}, it follows that
	$\rho=0$ and 
\[
	[(n-3)Y-2X]Y= 0,
\]
	which implies (iii) since  $Y\neq0$.

	It is easy to see that $n\neq5$ from (iii) of {\it Claim} because $X$ and $Y$ are distinct.
	So
	\[
	X'+X^2=Y'+Y^2= -\frac{2}{n-3}X' +\frac{4}{(n-3)^2}X^2,
	\]
	which is 
	\[
	X'+\frac{n-1}{n-3}X^2=0. 
	\]
	Solving the above equation, 
	$X=\frac{1}{qs-c_1}$ for some constant $c_1$, where $q=\frac{n-1}{n-3}$.
	Using \eqref{4.3}, $f'=2X$; by integrating this equation,
	the potential function can be expressed as $f=\frac{2}{q} \log (s)$ modulo a constant.
	From $\frac{h'_1}{h_1}=X$ and $\frac{h'_2}{h_2}=Y$, we can get functions $h_1$ and $h_2$.
	Putting them into \eqref{4.2}, \eqref{4.3} and \eqref{4.4}, one can easily get the Ricci curvature components and the scalar curvature of $g$.  
	The proof of this theorem is completed. 	
\end{proof}

\noindent{\bf Subcase II.}  ~~~~ $k_2\neq0$ 

For this subcase, we have the following classification result,
which forms type {\rm (ii)} of Theorem \ref{local} for $r=2$.
	
\begin{theorem} \label{twoone2}
		For a gradient Ricci soliton $\left(M^n, g, f \right) $ with harmonic Weyl curvature,
	assume in some neighborhood $U$ of $p$ in $M_A \cap \{ \nabla f \neq 0  \}$, 
	$(M^n,g)$ is locally isometric to a domain in 
	$\mathbb{R}^{1}\times \mathbb{R}^{1} \times N^{n-2}_2$
	with
	\[
	g= ds^2 + h^2_1(s)dt^2+h^2_2(s) \tilde{g}_{2}, 
	\]
	where the $(n-2)$-dimensional Einstein manifold $(N_2, \tilde{g}_{2})$ is not Ricci flat.
	Then it is not steady (i.e., $\rho\neq0$).  The two distinct eigenvalues are exactly  0 and $\rho$
	with multiplicities $2$ and $(n-2)$. Also, $\nabla f$ is a null Ricci-eigenvector.
	
	Moreover, $(U,g)$ is locally isometric to a domain in 
	$\mathbb{R}^{2}\times N^{n-2}$
	with $g= ds^2 + s^2dt^2+\tilde{g} $,
	where $\left(N^{n-2}, \tilde{g}\right)$ 
	is an $(n-2)$-dimensional Einstein manifold
	with the Einstein constant $\rho\neq 0$.
	The potential function is given by $f = \frac{\rho}{2}s^2$ modulo a constant.
\end{theorem}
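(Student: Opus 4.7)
The plan is to combine the algebraic identities of Lemma \ref{lemma5.1} with the assumption $k_2 \neq 0$ to force $Y := \xi_\alpha \equiv 0$ on $U$, and then read off the stated local model by integrating the remaining first-order ODEs in $X := \xi_a$ and $f$.

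I would first specialize identity \eqref{5.3} to $r_1 = 1$, $r_2 = n-2$, obtaining
\[
XY\bigl[(n-3)Y^2 - 2XY - \rho\bigr] = 0.
\]
Lemma \ref{lemma5.5} gives $X \neq 0$, so either $Y \equiv 0$ or the bracket vanishes on $U$. The latter is equivalent, by Lemma \ref{lemma5.3}(i), to $(r_1-1)k_1/h_1^2 = (r_2-1)k_2/h_2^2$, which here reduces to $(n-3)k_2/h_2^2 = 0$; since $n \geq 4$ and $k_2 \neq 0$, this is impossible, so $Y \equiv 0$ on $U$.

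With $Y \equiv 0$ in hand, I would next read off the eigenvalues and integrate the remaining ODEs. The $\lambda_\alpha$-formula in \eqref{4.4} collapses to $\lambda_\alpha = (n-3)k_2/h_2^2$, while the soliton equation gives $\lambda_\alpha = -f'Y+\rho = \rho$; together $\rho = (n-3)k_2/h_2^2 \neq 0$, so the soliton is not steady and $h_2$ is constant. Identity \eqref{5.2} reduces to $X' + X^2 = 0$, so $X = 1/(s-c_1)$ and $h_1 = c_{h_1}(s-c_1)$. Identity \eqref{5.1} becomes $f'X = \rho$, giving $f' = \rho(s-c_1)$ and $f'' = \rho$. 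Then \eqref{4.2} yields $\lambda_1 = -f''+\rho = 0$ and \eqref{4.3} yields $\lambda_a = -f'X+\rho = 0$, so the two distinct Ricci-eigenvalues are $0$ (multiplicity $2$, realized by $\lambda_1$ and $\lambda_a$) and $\rho$ (multiplicity $n-2$), and $\nabla f$ is a null Ricci-eigenvector.

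Finally, shifting $s$ so that $c_1 = 0$ and rescaling $t$ to absorb $c_{h_1}$, the two-dimensional factor $ds^2 + h_1^2\,dt^2 = ds^2 + s^2\,dt^2$ is flat $\mathbb{R}^2$ in polar coordinates on a domain; absorbing the constant $h_2$ into $\tilde g_2$ produces the claimed Einstein metric $\tilde g$ on $N^{n-2}$ with Einstein constant $\rho$, and integrating $f' = \rho s$ gives $f = \frac{\rho}{2}s^2$ modulo a constant. The only delicate point is excluding the bracket-vanishing alternative in \eqref{5.3} at the first step; this is precisely where the hypothesis $k_2 \neq 0$ enters, via Lemma \ref{lemma5.3}(i). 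The rest is mechanical ODE integration and identification of the warped factors.
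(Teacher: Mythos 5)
Your proposal is correct and follows essentially the same route as the paper: both force $Y\equiv 0$ by combining \eqref{5.3} with Lemma \ref{lemma5.5} and Lemma \ref{lemma5.3}(i) (where $k_2\neq 0$ rules out the bracket-vanishing alternative), and then integrate the ODEs $X'+X^2=0$, $f''=\rho$ to obtain the flat $\mathbb{R}^2$ factor and $f=\frac{\rho}{2}s^2$. The only cosmetic difference is that you extract $f''=\rho$ from \eqref{5.1} while the paper reads it off from \eqref{4.2}; both are immediate from the same integrability system.
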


\begin{proof} 
First of all, we claim that 
\[
Y=0.
\]
In fact, in this case, we note that $r_1=1$ and $k_2\neq0$,
from (i) of Lemma \ref{lemma5.3}, and it follows that
	\[
	(r_1-1)X^2+(r_2-1)Y^2-2XY-\rho \neq 0.
\]
Combining with \eqref{5.3} implies $Y=0$ since $X\neq0$.

Next, the similar method of the proof of Theorem \ref{twodis} can be used to treat
this subcase. 
	$Y=0$ implies that $h_2$ is constant and $X^{'} + X^2=0$. Thus $X= \frac{1}{s-c_1}$ and 
	$h_1= c_{h_1} (s-c_1)$ with integrating $X= \frac{h^{'}_1}{h_1}$ for constants $c_1$ and $c_{h_1}$.
	By \eqref{4.2}, we have
	\[
	\lambda_1=-f''+\rho=0,
	\]
	which yields that $f''=\rho$ and then
	\[
	f(s) = \frac{1}{2} \rho (s-c_1)^2+C_1.
	\]
From \eqref{4.3} and \eqref{4.4}, we have 
	\[
		\lambda_a=-f'X+\rho=0 \quad {\rm and } \quad
	\lambda_\alpha=\rho=(n-3)\frac{k_1}{h^2_2}\neq0.
	\]
	Therefore, $(M,g)$ is locally isometric to a domain in 
	$\mathbb{R}^{2}\times N^{n-2}$
	with $g= ds^2 + s^2dt^2+\tilde{g_2}$,
	where $\left(N^{n-2}, \tilde{g_2}\right)$ 
	is an $(n-2)$-dimensional Einstein manifold
	with the Einstein constant $\rho\neq 0$.	
\end{proof}

	\section
	{The local structure of the case with the same Ricci-eigenvalues}
	
	In this section we treat the case that all Ricci-eigenvalues are equal, except for the first one.
	We set
	\[
	\lambda_{\alpha}:=\lambda_{2}= \cdots= \lambda_{n}.
	\]
	Type {\rm (iv)} of Theorem \ref{local} comes from this section.
	
	\smallskip
	For convenience, here we make the following convention on the range of indices:
	\[
	2 \leq \alpha, \beta, \gamma \cdots\leq n\]
	and denote $\xi_\alpha:=X=\frac{h'}{h}$.
	From Section 3, we have
	\begin{equation}\label{6.1}
	\frac{h''}{h}=X'+X^2=-\frac{R'}{2(n-1)f'},
	\end{equation}
	\begin{equation}\label{6.2}
	\lambda_{1}=-f''+\rho=-(n-1)\left( X'+X^2 \right)
	\end{equation}
	and
	\begin{equation}\label{6.3}
	\lambda_{\alpha}=-f'X+\rho=-\left( X'+X^2 \right)-(n-2)\left( X^2-\frac{k}{h^2} \right). 
	\end{equation}
	
	Consequently, we have the following theorem.
	\begin{theorem} \label{same}
	Let $(M^n, g,f)$ be an $n$-dimensional gradient Ricci soliton 
	with harmonic Weyl curvature. 	
	Suppose  that in some neighborhood $U$ of $p$ in $M_A \cap \{ \nabla f \neq 0  \}$, 
    all Ricci-eigenvalues are equal except for the one with respect to the gradient vector of the potential function.
	Then $g$ is a warped product:
		\begin{equation} \label{metr}
		g= ds^2 +  h^2(s) \tilde{g},
		\end{equation}
		for a positive function $h$,
		where the Riemannian metric $\tilde{g}$ is Einstein. 
		Furthermore, the $D$ tensor vanishes, so does the Bach tensor. 
	\end{theorem}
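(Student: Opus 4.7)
The plan is to read off the warped-product form directly from Theorem \ref{mulwar} and then verify by a short index computation that the $D$-tensor vanishes; the Bach tensor will then follow immediately from identities recalled in Section 2.

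First, the hypothesis puts us in the setting of Theorem \ref{mulwar} with a single multiplicity class (in the notation there, $l=0$, $m=1$, $r_1=n-1\geq 2$), so the general multiply warped-product metric \eqref{3.10} collapses to $g=ds^2+h^2(s)\tilde{g}$ with $\tilde{g}$ an Einstein metric on the leaf tangent to the common Ricci eigenspace. This settles both the warped-product form and the Einstein assertion for $\tilde{g}$; one also extracts $X=h'/h$ as the common value of $\xi_2,\dots,\xi_n$, which will be used in the next step through the scalar-curvature identity.

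Next, I would verify that $D\equiv 0$ directly from the definition \eqref{2.24}, working in the adapted orthonormal frame of Lemma \ref{lemma3.1}. Since $f_i=f'\delta_{i1}$ and $D_{ijk}$ is antisymmetric in $j,k$, the only potentially nonzero components are the $D_{a1b}$ with $a,b\in\{2,\dots,n\}$ (together with their antisymmetric partners). Substituting $R_{ij}=\lambda_i\delta_{ij}$ with $\lambda_2=\cdots=\lambda_n=\lambda_\alpha$, together with the scalar-curvature relation $R=\lambda_1+(n-1)\lambda_\alpha$ which is forced by the single-eigenvalue assumption, the Schouten-tensor term and the Einstein-tensor term in \eqref{2.24} cancel exactly; this is essentially a one-line algebraic check. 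Real analyticity of $(g,f)$ in harmonic coordinates, already invoked earlier in the paper, then propagates $D\equiv 0$ from the open dense set $M_A\cap\{\nabla f\neq 0\}$ to all of $M$.

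Finally, the harmonic-Weyl hypothesis forces the Cotton tensor $C$ to vanish by Lemma \ref{lemma2.3}, and the Cao--Chen identity \eqref{2.25} in Lemma \ref{lemma2.2} then yields $B_{ij}=0$ at once. The only genuine obstacle in this argument is the index-case verification for $D$; the key algebraic ingredient making the cancellation possible is precisely the scalar-curvature relation $R=\lambda_1+(n-1)\lambda_\alpha$, which in turn requires exactly one repeated Ricci eigenvalue transverse to $\nabla f$, so the hypothesis of the theorem is used in an essential way.
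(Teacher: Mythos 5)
Your proposal is correct, and for the key assertion $D\equiv 0$ it takes a genuinely different (and more elementary) route than the paper. For the warped-product form both arguments simply invoke Theorem \ref{mulwar} with a single eigenvalue class of multiplicity $n-1$, so there is no difference there (the paper adds a side remark, via Cheeger--Colding, covering the degenerate situation where the metric is Einstein with nonconstant $f$). For the vanishing of $D$, the paper first computes the full curvature of the warped product, derives the Weyl components, observes that $W_{1ijk}=0$ for all $i,j,k$, and then concludes from the identity $D_{ijk}=C_{ijk}+f_lW_{lijk}$ of Lemma \ref{lemma2.2} together with $C=0$ from Lemma \ref{lemma2.3}. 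You instead substitute $R_{ij}=\lambda_i\delta_{ij}$ with $\lambda_2=\cdots=\lambda_n=\lambda_\alpha$ and $f_i=f'\delta_{i1}$ directly into the definition \eqref{2.24}: the only candidate components are $D_{aa1}=-D_{a1a}$, and
\begin{equation*}
D_{aa1}=\frac{f'}{n-2}\left(\lambda_\alpha-\frac{R}{2(n-1)}+\frac{\lambda_1-R/2}{n-1}\right)=0
\end{equation*}
once $R=\lambda_1+(n-1)\lambda_\alpha$ is inserted; I have checked this cancellation and it is exact. Your computation shows that $D=0$ here is a purely pointwise algebraic consequence of the Ricci tensor having at most two eigenvalues with $\nabla f$ as an eigenvector --- essentially the converse of the fact quoted in the paper's proof of Theorem \ref{local} --- and requires no Weyl-tensor computation; what it does not deliver is the explicit vanishing of the radial Weyl components $W_{1ijk}$, which the paper's computation records along the way. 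Both arguments then obtain Bach flatness identically from $C=0$ and \eqref{2.25}, and your appeal to real analyticity to propagate $D\equiv 0$ beyond $U$ matches what the paper does later in the proof of Theorem \ref{local}.
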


	\begin{proof}
		The first part of this theorem has been obtained by Theorem \ref{mulwar} in the third section.
		In particular, if all Ricci-eigenvalues are equal, then the metric is Einstein. 
		If $f$ is not constant, then the conclusion of Theorem \ref{same} still holds.  
		In fact,  Cheeger and Colding \cite{CC} presented a
		characterization of warped product structures on a Riemannian manifold $M$ in terms
		of solutions to the more general equation
		\[
		Hess(f)=\mu g
		\]
		for some smooth function $\mu$ on $M$. 
		More precisely, the Einstein metric $g$ becomes locally of the form 
		\[
		g= ds^2 + (f'(s))^2 \tilde{g}
		\]
		where $\tilde{g}$ is an Einstein metric. 
		
		Next we only need to verify the second part.
		It is easy to check that the gradient Ricci soliton is $D$-flat and Bach-flat. 
		In fact, from Section 3, its Riemannian curvatures are expressed as
		\[
		R_{1\alpha 1\beta}=-\left( X'+X^2\right)\delta_{\alpha\beta}=-\frac{R'}{2(n-1)}\delta_{\alpha\beta}
		\]
		and 
		\[
		R_{\alpha\beta\gamma\delta}
		=\tilde {R}_{\alpha\beta\gamma\delta}-
		X^2\left( \delta_{\alpha\gamma}\delta_{\beta\delta}-\delta_{\alpha\delta}\delta_{\beta\gamma}\right).
		\]
		
		By equations \eqref{6.2} and \eqref{6.3}, the scalar curvature and the Schouten tensor are as follows
		\[
		R=-2(n-1)\left( X'+X^2\right)-(n-1)(n-2)\left(X^2-\frac{k}{h^2} \right),
		\]
		\[
		A_{11}=-(n-2)\left( X'+X^2\right)+\frac{n-2}{2}\left(X^2-\frac{k}{h^2} \right)
		\]
		and 
		\[
		A_{\alpha\beta}=-\frac{n-2}{2}\left(X^2-\frac{k}{h^2}\right)\delta_{\alpha\beta}.
		\]
		Putting them into \eqref{2.15}, the Weyl tensor is given by
		\[
		W_{1\alpha1\beta}=0, \quad W_{1\alpha\beta\gamma}=0
		\]
		and
		\[
		W_{\alpha\beta\gamma\delta}
		=\frac{1}{h^2}\tilde {R}_{\alpha\beta\gamma\delta}-
		\frac{k}{h^2}\left( \delta_{\alpha\gamma}\delta_{\beta\delta}-\delta_{\alpha\delta}\delta_{\beta\gamma}\right).
		\]
		Finally, the harmonicity of Weyl tensor implies that the Cotton tensor $C_{ijk}=0$.
		With the relationships \eqref{2.26} and \eqref{2.25},
		it follows that $D$ tensor vanishes and the gradient Ricci soliton is Bach-flat. 
	\end{proof}
	
	\section{Classification of gradient Ricci solitons with harmonic Weyl curvature}
	In this section, we summarize and prove the theorems stated in the introduction.
Now we are going to combine theorems \ref{twodis}, 
\ref{twoone1}, \ref{twoone2} and \ref{same} to prove Theorem \ref{local}, 
in a similar way to the $4$-dimensional case of Kim \cite{Kim}.

\medskip
\noindent {\bf Proof of Theorem \ref{local}.} 
When the real analytic potential function $f$ is constant on some non-empty open subset, 
then it is constant on $M$ because of the connectivity of $M$.
So, if the soliton is of type {\rm (i)} on some non-empty open subset, it will be so on $M$.

If the soliton is of type {\rm (iv)} on some non-empty open subset with nonconstant $f$, 
then $D=0$ on $U$ and the real analytic function $\left| D \right| =0$ everywhere on $M$. 
Hence the soliton is of type {\rm (iv)} on $M$ and $f$ is nonconstant on $M$. 
The Ricci tensor of a gradient Ricci soliton with vanishing $D$-tensor 
either has a unique eigenvalue, or has two distinct eigenvalues of 
multiplicity 1 and $(n-1)$ respectively. 
Hence types {\rm (ii)} and {\rm (iii)} do not satisfy $D=0$.

For type {\rm (ii)}, the scalar curvature $R=(n-r-1)\rho$ is a nonzero constant on some non-empty open subset $U$,
and by real analyticity, $R=(n-r-1)\rho$ on $M$.
However, for type {\rm (iii)}, the scalar curvature 
$R= -\frac{4(n-3)^2}{(n-1)^2s^2}$ is not constant.
Therefore, among types {\rm (i)}-{\rm (iv)} in Theorem \ref{local}, each type is different from the other three types.
This completes the proof of Theorem \ref{local}.
\hfill $\Box$
\medskip

\vspace{0.3cm}

\medskip
	\noindent {\bf Acknowledgments}
	This work was essentially completed while the author was visiting Lehigh University from August, 2019 to August, 2020. 
	She would like to thank her advisor Professor Huai-Dong Cao  
	for suggesting the research project, and for his invaluable guidance, constant encouragement and support.
	She is grateful to her advisors Professor Yu Zheng and Professor Zhen Guo for their constant encouragement and support.
	She also would like to thank Junming Xie, Jiangtao Yu, and other members of the geometry group at Lehigh for their interest, 
    helpful discussions, and suggestions during the preparation of this paper. 
    She is particularly grateful to Professor Jongsu Kim for pointing out an error in the first (arXiv) version.
    She also would like to thank the China Scholarship Council (No: 201906140158), the Scientific Research Foundation of Chongqing University of Technology
    (No.2021ZDZ023) and the Science and Technology Research Project of Chongqing Education Commission
    (No.KJZD-K202201102).
    and the Program for Graduate Students of East China Normal University (No: YBNLTS 2020-044)
    for the financial support, and the Department of Mathematics at Lehigh University
    for hospitality and for providing a great environment for research.

	\bibliographystyle{amsplain}

\end{document}